\newtheorem{thm}{Theorem}[section]
\newtheorem{lemma}[thm]{Lemma}
\newtheorem{prop}[thm]{Proposition}
\newtheorem{coro}[thm]{Corollary}
\theoremstyle{definition}
\newtheorem{defi}[thm]{Definition}
\newtheorem{rem}[thm]{Remark}
\newtheorem{ex}[thm]{Example}
\def\Z{\mathds Z}
\def\Q{\mathds Q}
\def\R{\mathds R}
\def\phi{\varphi}
\def\<{{\langle}}
\def\>{{\rangle}}
\newcommand{\lw}[1]{\text{lw}(#1)}
\newcommand{\interior}[1]{\text{int}(#1)}
\newcommand{\floor}[1]{\left\lfloor #1 \right\rfloor}
\newcommand{\ceil}[1]{\left\lceil #1 \right\rceil}
\newcommand{\Min}[2]{\mathrm{Min}_{#1}(#2)}
\newcommand{\conv}[1]{\mathrm{conv}\left(#1\right)}
\newcommand{\normalfan}[1]{\Sigma_{#1}}
\definecolor{zzttqq}{rgb}{0.6,0.2,0}
\definecolor{ududff}{rgb}{0.3,0.3,1}
\definecolor{aabbcc}{rgb}{1,0,0}
\begin{document}
	
	\title[The Fine interior of dilations of a rational polytope]{The Fine interior of dilations of a rational polytope}
	
	\author[Martin Bohnert]{Martin Bohnert}
	\address{Mathematisches Institut, Universit\"at T\"ubingen,
		Auf der Morgenstelle 10, 72076 T\"ubingen, Germany}
	\email{martin.bohnert@uni-tuebingen.de}
	
	\begin{abstract}
	A nondegenerate toric hypersurface of negative Kodaira dimension can be characterized by the empty Fine interior of its Newton polytope according to recent work by Victor Batyrev, where the Fine interior is the rational subpolytope consisting of all points which have an integral distance of at least $1$ to all integral supporting hyperplanes of the Newton polytope. Moreover, we get more information in this situation if we can describe how the Fine interior behaves for dilations of the Newton polytope, e.g. if we can determine the smallest dilation with a non-empty Fine interior. Therefore, in this article we give a purely combinatorial description of the Fine interiors of all dilations of a rational polytope, which allows us in particular to compute this smallest dilation and to classify all lattice $3$-polytopes with empty Fine interior, for which we have only one point as Fine interior of the smallest dilation with non-empty Fine interior.
	\end{abstract}
	
	\maketitle
	
	\thispagestyle{empty}
	
	\section{Introduction}
	
	The Fine interior $F(P)$ of a lattice $d$-polytope $P\subseteq \R^d$ was introduced by Jonathan Fine in \cite[§4.2.]{Fin83} to study top degree differentials of hypersurfaces using their Newton polytope. Originally called \textit{heart} of the Newton polytope, we now use the term \textit{Fine interior} as introduced by Miles Reid when working with plurigenera and canonical models of nondegenerate toric hypersurfaces in \cite[II, 4 Appendix]{Rei87}.
	
	If the Fine interior of the Newton polytope of a nondegenerate hypersurface in a torus is not empty, then we can construct both a unique projective model with at worst canonical singularities and minimal models of the hypersurface using the Fine interior and related combinatorial data, as recently shown by Victor Batyrev in \cite{Bat23}. However, if the Fine interior is empty, the hypersurface has negative Kodaira dimension, and we either get the Zariski closure of the hypersurface in a suitable canonical toric $\Q-$Fano variety as a canonical $\Q$-Fano hypersurface, or we get a $\Q$-Fano fibration for the compactification of the hypersurface by \cite{Bat24}.
	
	We focus here on the second case with an empty Fine interior and call a lattice polytope with an empty Fine interior \textit{$F$-hollow}. To study this case in detail, it is necessary to understand how the Fine interior behaves for dilations of the lattice polytope. The most important affine unimodular invariants of the lattice polytope $P$ in this context are the \textit{minimal multiplier} $\mu=\mu(P)$, which is the smallest real number $\lambda$ with $F(\lambda P)\neq \emptyset$, and the associated Fine interior $F(\mu P)$. We call an $F$-hollow lattice polytope $P$ \textit{weakly sporadic}, if $\dim(F(\mu P))=0$, which corresponds to the first situation with a canonical $\Q$-Fano hypersurface. If $\dim(F(\mu P))>0$, we get the $\Q$-Fano fibration with a combinatorial description from the lattice projection of $P$ along $F(\mu P)$.
	
	There are two main goals of this article: first to give a simple purely combinatorial description of the Fine interiors of all dilations of a rational polytope (Theorem \ref{main_theorem}), which allows in particular to compute the minimal multiplier and to see it as a rational number (Corollary \ref{min_mult_rational}), and, as a second goal, to use these tools to classify all weakly sporadic $F$-hollow $3$-polytopes (Theorem \ref{weakly_sporadic classification}, Theorem \ref{sporadic classification}).
	
	We begin by fixing the notation and give precise definitions of the Fine interior and related combinatorial objects in the next section. The third section gives the description of the Fine interior of dilations of a lattice polytope as the intersection of a hyperplane with a polyhedron of dimension $d+1$, before we take a closer look at the commutativity of intersections with hyperplanes and the computation of the Fine interior in section 4. In section 5 we introduce the minimal and some other special multipliers that mark combinatorial changes for the Fine interior during the dilation of the polytope. And in the last section we give the classification of the weakly sporadic $F$-hollow polytopes in dimension $3$.

\section{The Fine interior and associated combinatorial objects} 

Let be $d\in \Z_{\geq 1}$, $M\cong \Z^d$ be a lattice of rank $d$, $N=\mathrm{Hom}(M,\Z)$ be the lattice dual to $M$ and $\left<\cdot,\cdot\right>\colon M\times N \to \Z$ be the natural pairing. We have real vector spaces $M_\R = M \otimes \R$ and $N_\R = N \otimes \R$ and extend the natural pairing to them.

We will look now at convex geometric objects in $M_\R$ and $N_\R$ and give a brief description of the objects we will need later. For more details see e.g. \cite{Zie07}. The convex hull of a finite number of points in $M_\Q\subseteq M_\R$ is called a \textit{rational $d$-polytope} if there are $d+1$ affine independent points among these points. A \textit{lattice $d$-polytope} is a rational $d$-polytope with all vertices in $M$. For $\lambda \in \R$ and a rational $d$-polytope $P\subseteq M_\R$ we define the real dilation
\begin{align*}
\lambda P := \{\lambda\cdot x \in M_\R \mid x \in P\} \subseteq M_\R.
\end{align*}
By a \textit{$d$-polytop} in $M_\R$ we mean in the following a real dilation of some rational $d$-polytope, and these are the most general polytopes we work with.

If a dual vector $y \in N_\R\setminus \{0\}$ attains its minimum on a set $P\subseteq M_\R$, we denote this minimum by
\begin{align*}
	\Min{P}{y}:= \min \{\left<x,y\right> \mid x\in P\}
\end{align*}
and we write $\Min{P}{y}=-\infty$ if for all $n\in \Z_{\geq 0}$ there is a point $p_n\in P$ with $\left<p_n,y\right><-n$. We can use this notation, for example, to describe a convex, closed set $P\subseteq M_\R$ with help of the supporting hyperplane theorem as an intersection of closed half-spaces by 
\begin{align*}
	P =& \{x\in M_\R \mid \left<x,y\right> \geq \Min{P}{y} \ \forall y \in N_\R\setminus \{0\} \}.
\end{align*}
For a $d$-polytope $P\subseteq M_\R$ we define additional data to simplify this dual description. For a face $F \preceq P$, i.e. the intersection of $P$ with a supporting hyperplane, we have a cone
\begin{align*}
	\sigma_F := \{ y \in N_\R \mid \Min{P}{y}=\Min{F}{y} \} \subseteq N_\R,
\end{align*}
the \textit{normal cone of the face $F$}. All normal cones together form a fan, which we call the \textit{normal fan $\normalfan{P}$} of the polytope $P$. We write $\Sigma_P[1]\subseteq N$ for the set of primitive ray generators of the normal fan, i.e. the primitive dual lattice vectors that generate the cones of dimension $1$. We can now use this to uniquely describe our $d$-polytope as the intersection of the finitely many closed half-spaces associated to facets, i.e. faces of dimension $d-1$, by
\begin{align*}
	P = \{x\in M_\R \mid \left<x,n\right> \geq \Min{P}{n} \ \forall n \in \Sigma_P[1] \}.
\end{align*}
More generally, we call the possibly unbounded intersection of finitely many closed half-spaces in $M_\R$ a \textit{polyhedron}. A \textit{rational polyhedron} is a polyhedron, where we can chose for each facet of the polyhedron a normal vector from $N$. If we have $0$ in each facet, we have a \textit{polyhederal cone}. We can associate to each polytope in $M_\R$ the \textit{cone over $P$}, i.e. the polyhedron cone defined by
\begin{align*}
	\mathrm{cone}(P):= \R_{\geq 0} (\{1\} \times P)\subseteq \R \times M_R.
\end{align*}
Moreover, every polyhedron has a decomposition as a Minkowski sum of a polytope and a polyhedral cone where the polyhedral cone is uniquely determined and is called the \textit{recession cone of the polyhedron}.

We will now introduce the Fine interior of a $d$-polytope as the central combinatorial object of the article.

\begin{defi}\label{Def_FineInt}
Let $P\subseteq M_\R$ be a $d$-polytope. Then the \textit{Fine interior $F(P)$} of $P$ is defined as
\begin{align*}
F(P) := \{x\in M_\R \mid \left<x,n\right> \geq \Min{P}{n}+1 \ \forall n \in N\setminus \{0\} \} \subseteq P.
\end{align*}
\end{defi}

In fact, we do not need to use all the dual vectors of $N\setminus \{0\}$ in the definition \ref{Def_FineInt}. It is enough to choose the finitely many dual vectors that are part of a Hilbert basis of some cone of the normal fan $\Sigma_P$ (\cite[3.10]{Bat23}). In particular, since $F(P)$ is bounded as a subset of $P$, the Fine interior of a $d$-polytope is itself a $d$-polytope. By \cite[5.4]{Bat23} it is even possible to restrict to primitive ray generators in the canonical refinement of $\Sigma_P$, i.e. elements of
\begin{align*}
\normalfan{P}^{\mathrm{can}}[1] = \{ n \in N \mid n \text{ is vertex of } \conv{\sigma \cap (N\setminus \{0\})} \text{ for a maximal cone } \sigma \in \Sigma_P\}
\end{align*}
and so we get
\begin{align*}
	F(P) = \{x\in M_\R \mid \left<x,n\right> \geq \Min{P}{n}+1 \ \forall n \in \normalfan{P}^{\mathrm{can}}[1] \}.
\end{align*}
But this is the strongest possible a priori restriction on dual lattice vectors, since by \cite[5.3]{Bat23}  we need  all $n \in \normalfan{P}^{\mathrm{can}}[1]$ if we look at $\lambda P$ for $\lambda$ large enough instead of $P$.

A posteriori, we could use in the definition \ref{Def_FineInt}  only those dual vectors that define half-spaces which really touch $F(P)$ after the shift by $1$. We have the following formal definition for these dual vectors:

\begin{defi}
Let $P\subseteq M_\R$ be a $d$-polytope with $F(P)\neq \emptyset$. Then the \textit{support of $F(P)$} is the set of dual lattice vectors
\begin{align*}
	S_F(P):=\{n \in N \mid \Min{F(P)}{n} = \Min{P}{n}+1\}\subseteq N.
\end{align*}
\end{defi}

In the construction of canonical models of nondegenerate hypersurfaces \cite{Bat23} there is another rational polytope defined by the support of the Fine interior which turns out to be very helpful, which we introduce now.

\begin{defi}
	Let $P\subseteq M_\R$ be a $d$-polytope with $F(P)\neq \emptyset$. Then the \textit{canonical hull of $P$} is the $d$-polytope
	\begin{align*}
		C(P):=\{x\in M_\R \mid \left<x,\nu\right> \geq \Min{P}{\nu} \ \forall \nu \in S_F(P) \}\subseteq P.
	\end{align*}
	Moreover, $P$ is called \textit{canonically closed} if $C(P)=P$, i.e. if and only if we have $\Sigma_{P}[1]\subseteq S_F(P)$.
\end{defi}

We end this section with two important examples of canonically closed lattice polytopes that we will need later.

\begin{ex}\label{Ex_canonically_closed_dim2}
All lattice polygons, i.e. lattice polytopes of dimension $d=2$, with non-empty Fine interior are canonically closed by \cite[4.4]{Bat23}.
\end{ex}

\begin{ex}\label{Ex_canonically_closed_reflexive}
In arbitrary dimension the \textit{reflexive polytopes}, i.e. the lattice polygons $P\subseteq M_\R$, which have $0\in \interior{P}$ and for which the dual polytope 
\begin{align*}
	P^* := \{ y \in N_\R \mid \left<x,y\right> \geq -1 \ \forall x \in P\} \subseteq N_\R
\end{align*}
is a lattice polytope, are canonically closed. More precisely, they are exactly those canonically closed lattice polytopes which have $\{0\}$ as their Fine interior (\cite[4.9]{Bat23}). Reflexive polytopes are well known because of their important role in combinatorial mirror symmetry (\cite{Bat94}). Note that we also get interesting objects for combinatorial mirror symmetry in some cases when the lattice polytope has only $\{0\}$ as its Fine interior, but it is not canonically closed (\cite{Bat17}).
\end{ex}

\section{Simultaneous description of the Fine interiors of all dilations}
	
In this section we want to describe the Fine interiors of all real dilations of a $d$-polytope simultaneously. As a first step, we want to understand the behavior of $\Min{P}{y}$, $\normalfan{P}$ and $S_F(P)$ under real dilations. We do this in the following two lemmata.

\begin{lemma}\label{Dilations_and_normalfans}
Let $P\subseteq M_\R$ be a $d$-polytope and $\lambda \in \R_{> 0}$. Then we have for all $y\in N_\R$ that $\lambda\Min{P}{y}=\Min{\lambda P}{y}$ and $\Sigma_P= \Sigma_{\lambda P}$.
\end{lemma}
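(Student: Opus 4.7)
The plan is to handle the two assertions separately; the second will follow from the first with almost no additional work, and the entire statement is essentially a bookkeeping exercise about how a positive scaling interacts with linear functionals.

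First I would establish $\lambda\Min{P}{y}=\Min{\lambda P}{y}$. By the definition of the real dilation, every element of $\lambda P$ is uniquely of the form $\lambda x'$ with $x'\in P$, and $\left<\lambda x',y\right>=\lambda\left<x',y\right>$. Since $\lambda>0$, positive scaling commutes with $\min$, so taking the minimum over $x'\in P$ yields the claim. No $\pm\infty$ issue appears, because $P$ is a polytope and hence bounded, so $\Min{P}{y}$ is a finite real number for every $y\in N_\R$.

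For $\normalfan{P}=\normalfan{\lambda P}$ I would first note that the homothety $x\mapsto\lambda x$ is an affine bijection of $M_\R$ which sends supporting hyperplanes of $P$ to supporting hyperplanes of $\lambda P$; consequently the faces of $\lambda P$ are exactly the sets $\lambda F$ with $F\preceq P$. Combining this with the first part,
\[
\sigma_{\lambda F}=\{y\in N_\R\mid \Min{\lambda P}{y}=\Min{\lambda F}{y}\}=\{y\in N_\R\mid \lambda\Min{P}{y}=\lambda\Min{F}{y}\}=\sigma_F,
\]
where in the last step I divide by $\lambda>0$. Hence every cone of $\normalfan{\lambda P}$ coincides with the corresponding cone of $\normalfan{P}$, and the two normal fans agree.

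There is honestly no hard step here; the only points to be careful about are that $\lambda>0$ is needed both to pull the scalar out of a $\min$ without flipping signs and to make $x\mapsto \lambda x$ a face-preserving bijection, and that boundedness of $P$ keeps $\Min{P}{y}$ finite. Both are automatic from the hypotheses.
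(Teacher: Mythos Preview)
Your proof is correct and follows essentially the same route as the paper: first pull $\lambda>0$ through the minimum using linearity of the pairing, then apply this to both $P$ and each face $F$ to see $\sigma_F=\sigma_{\lambda F}$. The only difference is cosmetic---you make explicit that the faces of $\lambda P$ are exactly the $\lambda F$, which the paper leaves implicit.
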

\begin{proof}
Since $\lambda > 0$ the multiplication with $\lambda$ respects the minimum and due to the linearity of $\left<\cdot, y\right>$ we have
\begin{align*}
	\lambda\Min{P}{y}=& \lambda \cdot \min \{\left<x,y\right> \mid x\in P\}= \min \{\left<\lambda x,y\right> \mid x\in P\} = \min \{\left<x,y\right> \mid x\in \lambda P\} \\=& \Min{\lambda P}{y}.
\end{align*}
If we use this not only for $P$, but also for a face $F\preceq P$, we get for the normal cones
\begin{align*}
	\sigma_F = \{ y \in N_\R \mid \Min{P}{y}=\Min{F}{y} \} = \{ y \in N_\R \mid \Min{\lambda P}{y}=\Min{\lambda F}{y} \}= \sigma_{\lambda F}
\end{align*}
and so we have $\Sigma_P=\Sigma_{\lambda P}$.
\end{proof}

\begin{lemma}\label{Dilations_and_support}
Let $P\subseteq M_\R$ be a $d$-polytope with $F(P)\neq \emptyset$. If $\nu \in S_F(P)$, then $\nu \in S_F(\lambda P)$ for all $\lambda \in \R_{\geq 1}$. If $F(P)$ is full-dimensional and $\nu \in \Sigma_{F(P)}[1]$, then we also have $\nu \in \Sigma_{F(\lambda P)}[1]$ for all $\lambda \in \R_{\geq 1}$.
\end{lemma}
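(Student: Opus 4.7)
The cornerstone of the argument is the Minkowski-sum inclusion
\begin{align*}
F(P)+(\lambda-1)P \;\subseteq\; F(\lambda P) \qquad \text{for all } \lambda\geq 1.
\end{align*}
This follows directly from Lemma~\ref{Dilations_and_normalfans}: given $a\in F(P)$, $b\in(\lambda-1)P$ and $n\in N\setminus\{0\}$, one has
\begin{align*}
\langle a+b,n\rangle \;\geq\; (\Min{P}{n}+1) + (\lambda-1)\Min{P}{n} \;=\; \Min{\lambda P}{n}+1.
\end{align*}
Moreover, if $a$ lies in the $n$-minimal face of $F(P)$ and $b$ in the $n$-minimal face of $(\lambda-1)P$, this inequality becomes an equality, so the Minkowski sum of these two minimal faces lands in the $n$-minimal face of $F(\lambda P)$.

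For the first claim I would pick $x_0\in F(P)$ with $\langle x_0,\nu\rangle=\Min{P}{\nu}+1$ (which exists by $\nu\in S_F(P)$) and any $p_0\in P$ realizing $\Min{P}{\nu}$. Then $y:=x_0+(\lambda-1)p_0$ lies in $F(\lambda P)$ by the inclusion above, and a direct computation gives $\langle y,\nu\rangle=\lambda\Min{P}{\nu}+1=\Min{\lambda P}{\nu}+1$. Combined with the a~priori lower bound $\Min{F(\lambda P)}{\nu}\geq\Min{\lambda P}{\nu}+1$ built into the definition of $F(\lambda P)$, this forces equality, so $\nu\in S_F(\lambda P)$.

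For the second claim I would first verify the inclusion $\Sigma_{F(P)}[1]\subseteq S_F(P)$: each primitive inner facet normal $\nu$ of the rational polytope $F(P)$ must realize $\Min{F(P)}{\nu}=\Min{P}{\nu}+1$, because the half-space coming from any multiple $k\nu$ with $k\in\Z_{>1}$ is strictly weaker than the one coming from the primitive $\nu$. The first claim then gives $\nu\in S_F(\lambda P)$, hence $\Min{F(\lambda P)}{\nu}=\Min{\lambda P}{\nu}+1$. To upgrade this to $\nu\in\Sigma_{F(\lambda P)}[1]$, let $F_\nu\preceq F(P)$ be the facet on which $\nu$ attains its minimum and $G_\nu\preceq P$ the face on which $\nu$ attains $\Min{P}{\nu}$. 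By the refined Minkowski computation of paragraph~1, the sum $F_\nu+(\lambda-1)G_\nu$ is contained in the $\nu$-minimal face of $F(\lambda P)$, so this face has dimension at least $\dim F_\nu=d-1$. Since $F(\lambda P)\supseteq F(P)+(\lambda-1)P$ is itself $d$-dimensional, the $\nu$-minimal face must be exactly a facet, proving $\nu\in\Sigma_{F(\lambda P)}[1]$. The most delicate step is the primitivity argument that lets us conclude $\Sigma_{F(P)}[1]\subseteq S_F(P)$; once this and the Minkowski inclusion are set up, the rest is a routine unwinding of the definitions.
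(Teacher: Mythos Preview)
Your argument is correct and is essentially the same as the paper's: both produce the witness point $x_0+(\lambda-1)p_0$ (which in the paper's notation is $\lambda x_\nu+w_\nu$) and check it lies in $F(\lambda P)$ with $\langle\,\cdot\,,\nu\rangle=\Min{\lambda P}{\nu}+1$, then repeat over a facet's worth of points for the second claim. Your Minkowski-sum formulation $F(P)+(\lambda-1)P\subseteq F(\lambda P)$ is a cleaner packaging of the paper's somewhat convoluted chain of equalities, and you make explicit the inclusion $\Sigma_{F(P)}[1]\subseteq S_F(P)$ that the paper silently uses when it says ``we can do the same calculations''.
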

\begin{proof}
Since $\nu \in S_F(P)$, there is a point $x\in F(P)$ with $\left<x,\nu\right>=\Min{P}{\nu}+1$ and
$\left<x,n\right>\geq \Min{P}{n}+1$ for all $n\in N\setminus \{0\}$. For each $n\in N\setminus \{0\}$ we take some $x_n\in P$ with $\left<x_n,n\right>=\Min{P}{n}$ and set $w_n:=x-x_n\in M_\R$. Then we have $x=x_\nu+w_\nu=x_n+w_n$ for all $n\in N\setminus\{0\}$ and 
\begin{align*}
\left<w_\nu,\nu\right>=\left<x-x_\nu,\nu\right>=1, \left<w_n,n\right>=\left<x-x_n,n\right>\geq 1.
\end{align*}
Thus we get for $\lambda \in \R_{\geq 1}$ and $n\in N\setminus\{0\}$ with \ref{Dilations_and_normalfans}
\begin{align*}
\left<\lambda x_\nu+w_\nu,n\right>=&\left<\lambda(x_n+w_n-w_\nu)+w_\nu,n\right>\\=& \lambda \left<x_n,n\right>+\lambda\left<w_n,n\right>-(\lambda-1)\left<w_\nu,n\right>\\= &\lambda \Min{P}{n}+\lambda\left<w_n,n\right>-(\lambda-1)\left<x_n+w_n-x_\nu,n\right>\\
= &\Min{\lambda P}{n}+\left<w_n,n\right>-(\lambda-1)\Min{P}{n}+(\lambda-1)\left<x_\nu,n\right>\\
\geq & \Min{\lambda P}{n}+1,
\end{align*}
because $x_\nu \in P$ and so $\left<x_\nu,n\right>\geq \Min{P}{n} $.
So $\lambda x_\nu+w_\nu \in F(\lambda P)$ and we get $\nu \in S_F(\lambda P)$ by
\begin{align*}
	\left<\lambda x_\nu+w_\nu,\nu\right>=\lambda \Min{P}{\nu}+1=\Min{\lambda P}{\nu}+1.
\end{align*}
If $\nu \in \Sigma_{F(P)}[1]$, then we can do the same calculations not only for $x$ but also for $d$ affine independent points on the corresponding facet to $\nu$ of $F(P)$ and get $d$ affine independent points on a facet of $F(\lambda P)$ defined by $\nu$ and thus $\nu \in \Sigma_{F(\lambda P)}[1]$ for all $\lambda \in \R_{\geq 1}$.
\end{proof}

We are now ready to describe the Fine interior of all real dilations of a $d$-polytope in $M_\R$ simultaneously with the help of a suitable polyhedron in $\R \times M_\R$, which lies in the cone over the polytope.
	
\begin{thm}\label{main_theorem}
Let $P\subseteq M_\R$ be a $d$-polytope and $\lambda \in \R_{\geq 0}$. Then
\begin{align*}
\mathcal{F}(P):=&\left\{(x_0,x)\in \R \times M_\R \mid \left<(x_0,x),(-\Min{P}{\nu},\nu)\right> \geq 1 \ \forall \nu \in \normalfan{P}^{\mathrm{can}}[1]  \right\} \subseteq \R \times M_\R
\end{align*}
is a polyhedron without compact facets, the number of facets is $|\normalfan{P}^{\mathrm{can}}[1]|$, the recession cone of $\mathcal{F}(P)$ is $\mathrm{cone}(P)=\R_{\geq 0}(\{1\} \times P)$, and for all $\lambda \in \R_{\geq 0}$ we get  the Fine interior of $\lambda P$ from $\mathcal{F}(P)$ by the identity
\begin{align*}
\{\lambda \} \times F(\lambda P) = \mathcal{F}(P)\cap \{(x_0,x)\in \R \times M_\R \mid x_0=\lambda\}.
\end{align*}
\end{thm}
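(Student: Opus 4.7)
My plan is to derive everything from the slicing identity at a general height $\lambda$; the assertions about the recession cone, the facet count, and non-compactness then all follow by combining it with Lemmas~\ref{Dilations_and_normalfans} and~\ref{Dilations_and_support} and the results \cite[5.3, 5.4]{Bat23a} already recalled in Section~2.

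For the slicing at a fixed $\lambda > 0$ I would simply expand the condition $(\lambda, x) \in \mathcal{F}(P)$ to $\left<x, \nu\right> \geq \lambda \Min{P}{\nu} + 1$ for every $\nu \in \normalfan{P}^{\mathrm{can}}[1]$, rewrite this via Lemma~\ref{Dilations_and_normalfans} as $\left<x, \nu\right> \geq \Min{\lambda P}{\nu} + 1$ together with $\normalfan{\lambda P}^{\mathrm{can}}[1] = \normalfan{P}^{\mathrm{can}}[1]$, and invoke \cite[5.4]{Bat23a} to identify the resulting set with $F(\lambda P)$. At $\lambda = 0$ both sides are empty: on the right by the very definition of the Fine interior, and on the left because completeness of $\normalfan{P}$ lets us write $-\nu$ as a non-negative combination of vectors in $\normalfan{P}^{\mathrm{can}}[1]$ for any $\nu$ in that set, producing contradictory inequalities $\left<x,\nu\right> \geq 1$ and $\left<x,\nu\right> \leq 0$.

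For the recession cone I would use the standard description
\begin{align*}
\mathrm{rec}(\mathcal{F}(P)) = \left\{(y_0, y) \in \R \times M_\R \mid \left<y, \nu\right> \geq y_0 \Min{P}{\nu} \ \forall \nu \in \normalfan{P}^{\mathrm{can}}[1]\right\}.
\end{align*}
The inclusion $\mathrm{cone}(P) \subseteq \mathrm{rec}(\mathcal{F}(P))$ is a one-line check starting from $(y_0, y) = t(1, p)$ with $p \in P$. For the reverse I split on the sign of $y_0$: if $y_0 > 0$, dividing through and restricting to $\normalfan{P}[1] \subseteq \normalfan{P}^{\mathrm{can}}[1]$ forces $y/y_0 \in P$; if $y_0 = 0$, completeness of $\normalfan{P}$ forces $y = 0$; and $y_0 < 0$ is excluded because, starting from any point of $\mathcal{F}(P)$ (which exists at heights $\lambda$ large enough that $F(\lambda P) \neq \emptyset$) and moving along $(y_0, y)$ would produce non-empty slices at arbitrarily negative $\lambda$, contradicting the emptiness already established. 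The facet count then follows from \cite[5.3]{Bat23a} together with Lemma~\ref{Dilations_and_support}: every $\nu \in \normalfan{P}^{\mathrm{can}}[1]$ is a primitive facet normal of $F(\lambda P)$ for $\lambda$ large enough, so by the slicing identity the inequality defined by $\nu$ is non-redundant on that slice and hence non-redundant for $\mathcal{F}(P)$ itself. For non-compactness, the recession cone of such a facet equals $\mathrm{cone}(P) \cap \{\left<y,\nu\right> = y_0 \Min{P}{\nu}\}$, which contains the ray $\R_{\geq 0}(1, p_\nu)$ for any $p_\nu$ in the face of $P$ attaining $\Min{P}{\nu}$.

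The main obstacle I anticipate is the case $y_0 < 0$ in the recession cone computation: it cannot be handled by local inequality bookkeeping and is the single step where the already-proved slicing identity must feed back into the global geometry of $\mathcal{F}(P)$.
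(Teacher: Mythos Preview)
Your proposal is correct and follows essentially the same route as the paper: the slicing identity via Lemma~\ref{Dilations_and_normalfans} and \cite[5.4]{Bat23a}, the recession cone by homogenizing the defining inequalities, the facet count via \cite[5.3]{Bat23a}, and the unboundedness of the facets. The only tactical differences are that the paper handles the recession cone in a single chain of equalities (so the $y_0<0$ case is absorbed directly into the identification with $\mathrm{cone}(P)$ rather than fed back through the slicing identity), and that it deduces non-compactness of each facet from Lemma~\ref{Dilations_and_support} (persistence of $\nu\in\Sigma_{F(\lambda P)}[1]$ under further dilation) rather than by exhibiting the ray $\R_{\geq 0}(1,p_\nu)$ in the facet's recession cone as you do.
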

\begin{proof}
The Fine interior of $\lambda P$ is given by
\begin{align*}
F(\lambda P)=\{x\in M_\R \mid \left<x,\nu\right> \geq \Min{\lambda P}{\nu}+1 \ \forall \nu \in \normalfan{\lambda P}^\mathrm{can}[1]\}.
\end{align*}
With \ref{Dilations_and_normalfans} we get
\begin{align*}
F(\lambda P)=&\{x\in M_\R \mid \left<x,\nu\right> \geq \lambda \Min{P}{\nu}+1 \ \forall \nu \in \normalfan{P}^\mathrm{can}[1]\}\\
			=&\{x\in M_\R \mid \left<(\lambda,x),(-\Min{P}{\nu},\nu)\right> \geq 1 \ \forall \nu \in \normalfan{P}^\mathrm{can}[1]  \}
\end{align*}
and so we have
\begin{align*}
&\{\lambda\} \times F(\lambda P)\\=&\left\{(x_0,x)\in \R \times M_\R \mid \left<(x_0,x),(-\Min{P}{\nu},\nu)\right> \geq 1 \ \forall \nu \in \normalfan{P}^\mathrm{can}[1]  \right\}\cap \{x_0=\lambda\}.
\end{align*}
Since $\normalfan{P}[1]\subseteq \normalfan{P}^{\mathrm{can}}[1]$  we get the recession cone of $\mathcal{F}(P)$ by \cite[Prop. 1.12.]{Zie07} as
\begin{align*}
&\left\{(x_0,x)\in \R \times M_\R \mid \left<(x_0,x),(-\Min{P}{\nu},\nu)\right> \geq 0 \ \forall \nu \in \normalfan{P}^{\mathrm{can}}[1]  \right\}\\
=& \left\{(x_0,x)\in \R \times M_\R \mid \left<x,\nu\right> \geq x_0\cdot \Min{P}{\nu} \ \forall \nu \in \normalfan{P}^{\mathrm{can}}[1] \right\}\\
=& \left\{(x_0,x)\in \R \times M_\R \mid x\in x_0\cdot P \right\}\\
=& \mathrm{cone}(P).
\end{align*}
The facets of $\mathcal{F}(P)$ are unbounded because from $\nu \in \normalfan{F(\lambda_0P)}[1]$ we get $\nu \in \normalfan{F(\lambda P)}[1]$ for all $\lambda\geq \lambda_0$ by \ref{Dilations_and_support} and the number of facets is $|\normalfan{P}^{\mathrm{can}}[1]|$ since we need all vertices of the canonical refinement to describe $F(\lambda P)$ for $\lambda$ large enough.		
\end{proof}

We now focus on the situation where $P$ is at least a rational polytope, so that $\mathcal{F}(P)$ also becomes a rational polyhedron. For lattice polytopes we want to give an alternative description of $\mathcal{F}(P)$ using the Fine interior of $\mathrm{cone}(P)$. Although  we have not yet given a formal definition of the Fine interior of cones, it seems reasonable to have this definition completely analogous. Since $n \in N \setminus \{0\}$ attains a finite minimum on $\sigma$ if and only if $n$ is an element of the dual cone
\begin{align*}
	\check{\sigma}=\{y\in N_\R \mid \left<x,y\right> \geq 0 \ \forall x \in \sigma \} \subseteq N_\R
\end{align*}
and then the minimum is $0$, we have
\begin{align*}
	F(\sigma) :=& \{x\in M_\R \mid \left<x,n\right> \geq \Min{\sigma}{n}+1 \ \forall n \in N\setminus \{0\} \}\\
	=& \{x\in M_\R \mid \left<x,n\right> \geq 1 \ \forall n \in \check{\sigma} \cap N\setminus \{0\} \}.
\end{align*}
This allows us in some cases to get $\mathcal{F}(P)$ directly as the Fine interior of $\mathrm{cone}(P)$, as the following corollary shows.

\begin{coro}\label{Fine_interior_cone}
	If $P$ is a lattice polytope, then we have for all $\lambda\in \R_{\geq 1}$ that
	\begin{align*}
		\mathcal{F}(P)\cap \{x_0=\lambda\} =& F(\lambda P) \times \{\lambda\} = F(\mathrm{cone}(P)\cap \{x_0=\lambda\}) \times \{\lambda\}\\ =&F(\mathrm{cone}(P)) \cap \{x_0=\lambda\}.
	\end{align*}
	In particular, we have $\mathcal{F}(P)=F(\mathrm{cone}(P))$ if and only if $F(\lambda P)=\emptyset$ for all $\lambda<1$.
\end{coro}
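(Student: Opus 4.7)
The plan is to verify the chain of equalities from left to right and then derive the ``in particular'' clause. The first equality is a direct restatement of Theorem \ref{main_theorem} (with factors reordered). The second reduces to the tautological identification $\mathrm{cone}(P)\cap\{x_0=\lambda\}=\{\lambda\}\times\lambda P$ valid for $\lambda\geq 0$, which is immediate from $\mathrm{cone}(P)=\R_{\geq 0}(\{1\}\times P)$; under this identification the Fine interior of the slice agrees with $F(\lambda P)$.

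The third equality is the substantive one. I would apply the formula $F(\sigma)=\{x\mid\langle x,n\rangle\geq 1\ \forall n\in\check\sigma\cap(N\setminus\{0\})\}$ derived just before the corollary to $\sigma=\mathrm{cone}(P)$, with the ambient dual lattice now being $\Z\times N$. A direct computation gives $\check{\mathrm{cone}(P)}=\{(a,n)\in\R\times N_\R\mid a\geq -\Min{P}{n}\}$, so $F(\mathrm{cone}(P))$ is cut out by the inequalities $a x_0+\langle x,n\rangle\geq 1$ ranging over $(a,n)\in(\Z\times N)\setminus\{0\}$ with $a\geq -\Min{P}{n}$. The lattice hypothesis on $P$ enters here: it ensures $\Min{P}{n}\in\Z$, so that $(-\Min{P}{n},n)$ is an admissible integer test vector for every $n\in N$.

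Next I would show $(\lambda,x)\in F(\mathrm{cone}(P))$ if and only if $\lambda\geq 1$ and $x\in F(\lambda P)$. For $(\Rightarrow)$, testing against $(1,0)$ forces $\lambda\geq 1$, and testing against $(-\Min{P}{n},n)$ for each $n\in N\setminus\{0\}$ recovers the defining inequalities $\langle x,n\rangle\geq\lambda\Min{P}{n}+1$ of $F(\lambda P)$. For $(\Leftarrow)$, any test vector with $n=0$ satisfies $a\geq 1$, giving $a\lambda\geq 1$; for $n\neq 0$, multiplying $a\geq -\Min{P}{n}$ by $\lambda\geq 0$ and adding $\langle x,n\rangle\geq\lambda\Min{P}{n}+1$ yields $a\lambda+\langle x,n\rangle\geq 1$. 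Specialising to $x_0=\lambda$ then delivers the third equality.

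For the ``in particular'' clause, the inclusion $F(\mathrm{cone}(P))\subseteq\mathcal{F}(P)$ holds unconditionally, since the inequalities defining $\mathcal{F}(P)$ correspond to the subfamily of test vectors $(a,n)=(-\Min{P}{\nu},\nu)$ with $\nu\in\normalfan{P}^{\mathrm{can}}[1]$. The $(\Rightarrow)$ direction above also gives $F(\mathrm{cone}(P))\subseteq\{x_0\geq 1\}$. Combined with the equality on slices $\lambda\geq 1$, this makes $\mathcal{F}(P)=F(\mathrm{cone}(P))$ equivalent to $\mathcal{F}(P)\cap\{x_0<1\}=\emptyset$, which by Theorem \ref{main_theorem} is exactly $F(\lambda P)=\emptyset$ for all $\lambda<1$. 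There is no serious obstacle; the only delicate step is keeping track of the integrality of $\Min{P}{n}$, which is precisely where the lattice hypothesis is used.
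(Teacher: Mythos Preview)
Your argument is correct. The computation of $\check{\mathrm{cone}(P)}=\{(a,n)\mid a\geq -\Min{P}{n}\}$ is right, the biconditional $(\lambda,x)\in F(\mathrm{cone}(P))\Leftrightarrow \lambda\geq 1$ and $x\in F(\lambda P)$ is cleanly established (the split into $n=0$ and $n\neq 0$ is exactly what is needed, and you correctly flag that integrality of $\Min{P}{n}$ is where the lattice hypothesis enters), and the ``in particular'' clause follows from the unconditional inclusion $F(\mathrm{cone}(P))\subseteq\mathcal{F}(P)$ together with the slice equalities.

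Your route is genuinely different from the paper's. The paper proves the third equality by a structural statement: it shows that the vertices of $\conv{\check{\mathrm{cone}(P)}\cap(\Z\times(N\setminus\{0\}))}$ are precisely the points $(-\Min{P}{\nu},\nu)$ with $\nu\in\Sigma_P^{\mathrm{can}}[1]$, by analysing the facets $F_p\preceq\check{\mathrm{cone}(P)}$ and observing that every dual lattice point with nonzero $N$-part is dominated by one lying on a facet. This identifies the minimal system of inequalities for $F(\mathrm{cone}(P))$ (away from the $n=0$ ray) with the defining system of $\mathcal{F}(P)$, and then $\lambda\geq 1$ disposes of the $n=0$ tests. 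Your approach bypasses this vertex identification entirely: you never reduce to $\Sigma_P^{\mathrm{can}}[1]$, but instead test against \emph{all} admissible $(a,n)$ and show directly that the resulting constraint set coincides with that of $F(\lambda P)$ once $\lambda\geq 1$. What the paper's argument buys is an explicit description of the extremal dual lattice points of $\mathrm{cone}(P)$, tying $F(\mathrm{cone}(P))$ back to the canonical refinement; what your argument buys is brevity and self-containment, since it needs nothing beyond the raw definitions and one line of inequality arithmetic.
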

\begin{proof}
	By \ref{main_theorem} and the definition of $\mathrm{cone}(P)$  we get
	\begin{align*}
		\mathcal{F}(P)\cap \{x_0=\lambda\}=F(\lambda P) \times \{\lambda\} =F(\mathrm{cone}(P)\cap \{x_0=\lambda\}) \times \{\lambda\}.
	\end{align*}
	Since we have by definition
	\begin{align*}
	&F(\mathrm{cone}(P))=\\ &\{(x_0,x)\in \R \times M_\R \mid \left<(x_0,x),(n_0,n)\right> \geq 1 \ \forall (n_0,n) \in \check{\mathrm{cone}(P)} \cap (\Z \times N) \setminus \{(0,0)\} \},
	\end{align*}
	it is by \ref{main_theorem} and for $\lambda\geq 1$ enough to see that the vertices of
	\begin{align*}
	\conv{\check{\mathrm{cone}(P)} \cap (\Z \times N\setminus \{0\} )}
	\end{align*}
	are exactly given by $(-\Min{P}{\nu},\nu)$ with $\nu \in \normalfan{P}^\mathrm{can}$. First notice that $-\Min{P}{\nu}\in \Z$ since $P$ is a lattice polytope. Also, for all $x\in P$ we have 
	\begin{align*}
		\left<(-\Min{P}{\nu},\nu),(\lambda,\lambda x)\right>=-\lambda \Min{P}{\nu}+\lambda\left<\nu,x\right>)\geq 0
	\end{align*}
	and therefore $(-\Min{P}{\nu},\nu)\in \check{\mathrm{cone}(P)}$. With the same calculation we see that the facet $F_p\preceq \check{\mathrm{cone}(P)}$ corresponding to a vertex $p\preceq P$ is
	\begin{align*}
		F_p=\{(y_0,y)\in (\Z \times N)_\R \mid y \in \sigma_p, y_0=-\Min{P}{y} \}.
	\end{align*}
	Thus every lattice point $(n_0,n)\in \check{\mathrm{cone}(P)}\cap (\Z \times N\setminus \{0\})$ shares its $N$-projection with a lattice point in a facet, namely $(-\Min{P}{n},n)$. Thus the convex hull of $\check{\mathrm{cone}(P)}\cap (\Z \times N\setminus \{0\})$ is the convex hull of the lattice points different from $0$ in the facets, and the vertices of this convex hull are the points $(-\Min{P}{\nu}, \nu)$ with $\nu \in \normalfan{P}^\mathrm{can}$ by definition of the canonical refinement.	
\end{proof}

We end this section with a characterization of the cases where $\mathcal{F}(P)$ is as simple as possible, i.e. it is $\mathcal{F}(P)$ is a rational cone shifted by a lattice vector. For this, recall the following notions for Fano polytopes (for more background on Fano polytopes see e.g. the survey \cite{KN12}).

\begin{defi}
Let $P\subseteq M_\R$ be a lattice $d$-polytope. If $\mathrm{int}(P)\cap M=\{0\}$, then we call $P$ a \textit{canonical Fano polytope}. If there is some $k\in \Z_{\geq 1}$ such that $kP$ is affine unimodular equivalent to a reflexive polytope, then we call $P$ a \textit{Gorenstein polytope of index $k$}.
\end{defi}

We now get the following characterizations for the cases where $\mathcal{F}(P)$ is a rational cone shifted by a lattice vector.

\begin{coro}\label{Fine_lattice_cone}
	Let $P$ be a rational polytope. Then $\mathcal{F}(P)$ is a rational cone shifted by $(1,0)$ if and only if $P^*$ is a canonical Fano polytope.
\end{coro}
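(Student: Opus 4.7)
The plan is to reformulate both sides of the equivalence as the same explicit minimum condition. Since Theorem~\ref{main_theorem} identifies the recession cone of $\mathcal{F}(P)$ as $\mathrm{cone}(P)$, any cone realizing $\mathcal{F}(P)$ as its translate by $(1,0)$ must in fact be $\mathrm{cone}(P)$. Hence the corollary amounts to the assertion
\begin{align*}
\mathcal{F}(P) = (1,0) + \mathrm{cone}(P) \iff P^* \text{ is a canonical Fano polytope.}
\end{align*}

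First I would handle the left side. Reading off the facet description of $\mathcal{F}(P)$ from Theorem~\ref{main_theorem}, any polyhedron with a prescribed recession cone equals $p + (\text{recession cone})$ precisely when $p$ saturates every facet inequality. Applied to $p=(1,0)$, saturation of the inequality indexed by $\nu$ reads $-\Min{P}{\nu} = 1$. Thus $\mathcal{F}(P) = (1,0) + \mathrm{cone}(P)$ is equivalent to the explicit condition $\Min{P}{\nu} = -1$ for every $\nu \in \normalfan{P}^{\mathrm{can}}[1]$.

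Next I would connect this condition to $P^*$ being canonical Fano. Because $\normalfan{P}[1]\subseteq \normalfan{P}^{\mathrm{can}}[1]$, the condition applied to the primitive facet normals states exactly that the vertices of $P^*$ are the primitive ray generators of $\normalfan{P}$, i.e.\ that $P^*$ is a lattice polytope with $0\in \mathrm{int}(P^*)$. What remains is to show that the condition on the extra canonical rays in $\normalfan{P}^{\mathrm{can}}[1]\setminus \normalfan{P}[1]$ is equivalent to $\mathrm{int}(P^*)\cap N = \{0\}$. The key combinatorial tool is that, in a maximal cone $\sigma_v$ with primitive ray generators $\nu_1,\dots,\nu_k$ and corresponding vertex $v$ of $P$, the identity $\langle v,\nu_i\rangle = -1$ lets us compute $\Min{P}{n} = \langle v, n\rangle = -\sum a_i$ for $n=\sum a_i\nu_i$, together with the explicit description
\begin{align*}
\conv{\nu_1,\dots,\nu_k} + \sigma_v = \Bigl\{\textstyle\sum a_i\nu_i \,:\, a_i\geq 0,\ \sum a_i\geq 1\Bigr\}.
\end{align*}
For the forward direction, any $n\in N\setminus\{0\}$ lies in some $\sigma_v$ and is a convex combination of vertices $\omega_j$ of $\conv{\sigma_v \cap N\setminus\{0\}}$ plus a recession vector $r\in\sigma_v$; the hypothesis $\Min{P}{\omega_j}=-1$ together with $\langle v,r\rangle\leq 0$ yields $\Min{P}{n}\leq -1$, forbidding $n$ from $\mathrm{int}(P^*)$. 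For the converse, a vertex $\nu\in\normalfan{P}^{\mathrm{can}}[1]$ automatically satisfies $\Min{P}{\nu}\leq -1$ by the canonical Fano property; if this inequality were strict, then $\sum a_i>1$ would place $\nu$ inside $\conv{\nu_1,\dots,\nu_k}+\sigma_v$, contradicting $\nu$ being a vertex of the larger set, since extreme points of a polytope-plus-pointed-cone sum lie among the extreme points of the polytope summand $\{\nu_1,\dots,\nu_k\}$.

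I expect the main obstacle to be cleanly identifying the ``extra'' canonical rays in $\normalfan{P}^{\mathrm{can}}[1]\setminus \normalfan{P}[1]$ as the precise combinatorial obstructions to $\mathrm{int}(P^*)\cap N = \{0\}$. This is where one needs both the explicit Minkowski-sum description above and the observation that extreme points of a subset of a convex set which happen to be extreme in the larger set remain extreme in the smaller one, transferring the vertex property of $\nu$ from $\conv{\sigma_v \cap N\setminus\{0\}}$ down to $\conv{\nu_1,\dots,\nu_k}+\sigma_v$.
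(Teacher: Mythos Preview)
Your approach is correct and follows essentially the same route as the paper's (very terse) proof: both first use the recession-cone identification from Theorem~\ref{main_theorem} to reduce to $\mathcal{F}(P)=(1,0)+\mathrm{cone}(P)$, then translate this into a condition on the values $\Min{P}{\nu}$, and finally match that condition to $P^*$ being canonical Fano. The paper packages the middle step as the triple condition $\Sigma_P[1]=\Sigma_P^{\mathrm{can}}[1]$, $F(P)=\{0\}$, $\Min{P}{\nu}=-1$ on $\Sigma_P[1]$ and simply asserts the final equivalence, whereas you use the single condition $\Min{P}{\nu}=-1$ on all of $\Sigma_P^{\mathrm{can}}[1]$ and actually spell out why the extra canonical rays encode precisely the ``no nonzero interior lattice points'' part. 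Your version is more explicit but not genuinely different in strategy.

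One small imprecision worth fixing: your claimed equivalence ``$\Min{P}{\nu}=-1$ for all $\nu\in\Sigma_P[1]$'' $\Leftrightarrow$ ``$P^*$ is a lattice polytope with $0\in\mathrm{int}(P^*)$'' is not quite right in isolation. For rational $P$, requiring $P^*$ to be a lattice polytope only forces each vertex to be a positive integer multiple $k_\nu\nu$ of the primitive facet normal, so $\Min{P}{\nu}=-1/k_\nu$. You get $k_\nu=1$ only after also invoking $\mathrm{int}(P^*)\cap N=\{0\}$, since $k_\nu\geq 2$ would place $\nu$ itself in $\mathrm{int}(P^*)$. This does not break your overall argument (in the converse direction you have the full canonical Fano hypothesis anyway), but the two halves of the equivalence are not as cleanly separable as you present them. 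Also, the ``transfer'' difficulty you anticipate in the last paragraph does not really arise: under the canonical Fano assumption your own computation already shows $\conv{\sigma_v\cap(N\setminus\{0\})}=\conv{\nu_1,\dots,\nu_k}+\sigma_v$ as an equality of sets, so the vertex sets simply coincide and no subset-to-superset transfer is needed.
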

\begin{proof}
	Since $\mathrm{cone}(P)$ is the recession cone of $\mathcal{F}(P)$ by \ref{main_theorem}, $\mathcal{F}(P)$ is a rational cone shifted by $(1,0)$ if and only if $\mathcal{F}(P)=(1,0)+\mathrm{cone}(P)$. But by \ref{main_theorem} this is equivalent to $\Sigma_P[1]=\Sigma_P^{\mathrm{can}}[1]$, $F(P)=\{0\}$, and $\Min{P}{\nu}=-1$ for all $\nu \in \Sigma_P[1]$, which we have if and only if $P^\ast$ is a lattice polytope having only $0$ as an interior lattice point.
\end{proof}

\begin{coro}
	Let $P$ be a lattice polytope. Then $\mathcal{F}(P)$ is a lattice cone with vertex $(k,x), k\in \mathbb{Z}_{\geq 1}, x\in M$ if and only if $kP-x$ is a reflexive polytope. In particular, $P$ is then a Gorenstein polytope of index $k$.
\end{coro}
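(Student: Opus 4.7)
The plan is to reduce the statement to Corollary \ref{Fine_lattice_cone} applied to the candidate reflexive polytope $Q := kP - x$. The key tool is the explicit affine isomorphism
\[
\phi\colon \R \times M_\R \to \R \times M_\R, \qquad \phi(\mu, z) := (k\mu,\, z + \mu x),
\]
which sends $(1, 0)$ to $(k, x)$ and should identify $\mathcal{F}(Q)$ with $\mathcal{F}(P)$. To verify this identification, I would combine Lemma \ref{Dilations_and_normalfans} with the translation invariance of normal fans to obtain
\[
\normalfan{Q}^{\mathrm{can}}[1] = \normalfan{P}^{\mathrm{can}}[1] \quad \text{and} \quad \Min{Q}{\nu} = k \Min{P}{\nu} - \<x, \nu\>,
\]
and then substitute these into the facet description of $\mathcal{F}(Q)$ from Theorem \ref{main_theorem}. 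A short computation then shows that $\phi(\mu, z) \in \mathcal{F}(P)$ if and only if $(\mu, z) \in \mathcal{F}(Q)$, and the linear part of $\phi$ maps $\mathrm{cone}(Q)$ bijectively onto $\mathrm{cone}(P)$.

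Under this identification, $\mathcal{F}(P) = (k, x) + \mathrm{cone}(P)$ if and only if $\mathcal{F}(Q) = (1, 0) + \mathrm{cone}(Q)$, with the vertex lying in the lattice $\Z \times M$ in one case exactly when it does in the other (trivially, since the prescribed $k \in \Z_{\geq 1}$ and $x \in M$ make $\phi$ restrict to a map of lattices on the relevant points). Corollary \ref{Fine_lattice_cone} applied to $Q$ therefore reduces the claim to showing that, for the lattice polytope $Q$, the condition ``$Q^*$ is a canonical Fano polytope'' is equivalent to ``$Q$ is reflexive''.

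For this last equivalence, I would first note that the vertex condition forces $F(Q) = \{0\}$ and hence $0 \in \interior{Q}$, so $Q^*$ is a bona fide polytope containing $0$ in its interior. Its vertices are $\nu / (-\Min{Q}{\nu})$ for $\nu \in \normalfan{Q}[1]$, so $Q^*$ is a lattice polytope exactly when $\Min{Q}{\nu} = -1$ for every primitive facet normal $\nu$ of $Q$; combined with $Q$ being a lattice polytope with $0 \in \interior{Q}$, this is precisely the definition of reflexivity. The remaining ``only $0$ in the interior'' half of canonical Fano comes for free from the standard fact, reinforced by Example \ref{Ex_canonically_closed_reflexive} applied to the also-reflexive $Q^*$, that reflexive polytopes have $\{0\}$ as their unique interior lattice point. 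Finally, the ``in particular'' statement is immediate since translation by $-x \in M$ is affine unimodular, so $kP$ is affine unimodular equivalent to the reflexive polytope $kP - x$, making $P$ a Gorenstein polytope of index $k$ by definition.

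The main obstacle I anticipate is purely bookkeeping: carefully checking that $\phi$ transports every defining inequality of $\mathcal{F}(Q)$ to one of $\mathcal{F}(P)$, and cleanly separating the two halves of ``canonical Fano'' in the last step so as to avoid circularity when invoking Example \ref{Ex_canonically_closed_reflexive}. No genuinely new idea beyond the ones already developed in Section 3 and Corollary \ref{Fine_lattice_cone} should be required.
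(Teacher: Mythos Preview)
Your proposal is correct and follows essentially the same route as the paper: the paper's proof is a compressed three-sentence version of exactly your plan, asserting (without writing down your map $\phi$) that $\mathcal{F}(P)$ is a lattice cone with vertex $(k,x)$ if and only if $\mathcal{F}(kP-x)$ is a lattice cone with vertex $(1,0)$, then invoking Corollary~\ref{Fine_lattice_cone} and the observation that for the \emph{lattice} polytope $kP-x$ the condition ``dual is canonical Fano'' is equivalent to reflexivity. Your explicit $\phi$ and the verification via $\Min{Q}{\nu}=k\Min{P}{\nu}-\langle x,\nu\rangle$ are precisely the bookkeeping the paper leaves implicit.
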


\begin{proof}
	$\mathcal{F}(P)$ is a lattice cone with vertex $(k,x)\in \Z \times M$ if and only if $\mathcal{F}(kP-x)$ is a lattice cone with vertex $(0,1)$. This is equivalent by \ref{Fine_lattice_cone} to the situation where $(kP-x)^*$ is a canonical Fano polytope, and since $kP-x$ is a lattice polytope, this is the case if and only if $kP-x$ is reflexive.
\end{proof}

\section{The Fine interior and intersections with hyperplanes}

The result in \ref{Fine_interior_cone} motivates us to look at cases where we can do Fine interior computations in codimension $1$ using intersections with hyperplanes. We will see in this section that we have such results for dilations of lattice polytopes with small lattice width. Recall that the \textit{lattice width} $\lw{P}$ of a lattice polytope $P\subseteq M_\R$ is defined by
\begin{align*}
	\lw{P} := \min \{ -\Min{P}{-n}-\Min{P}{n} \mid n \in N\setminus \{0\} \}
\end{align*}
and a dual vector $n_{lw}\in N$ with
\begin{align*}
	\lw{P} = -\Min{P}{-n_{lw}}-\Min{P}{n_{lw}}
\end{align*}
is called a \textit{lattice width direction}.

We start with the lattice pyramid $\mathrm{Pyr}(P)$ of a lattice $d$-polytope $P\subseteq M_\R$, which is defined by
\begin{align*}
	\mathrm{Pyr}(P):=\conv{\{1\}\times P, (0,0)}.
\end{align*}
and clearly has $\lw{\mathrm{Pyr}(P)}=1$. Since we can also consider $\mathrm{Pyr}(P)$ as the intersection of $\mathrm{cone}(P)$ with the half-space $x_0\leq 1$, we get the following corollary from \ref{Fine_interior_cone}.

\begin{coro}\label{Fine_interior_pyramid}
Let $P\subseteq M_\R$ be a lattice $d$-polytope, $\mu,\lambda\in \Q_{\geq 0}$. Then we have for dilations of the lattice pyramid $\mathrm{Pyr}(P)\subseteq \R \times M_\R$
\begin{align*}
F(\mu\mathrm{Pyr}(P))\cap \{x_0=\lambda\}\cong \begin{cases}
F(\lambda P) &\text{ if } 1\leq \lambda \leq \mu-1\\
\emptyset &\text{ else}.
\end{cases}
\end{align*}
and if $F(\mu\mathrm{Pyr}(P))\neq \emptyset$, then $\mu\geq 2$ and
\begin{align*}
&S_F(\mu\mathrm{Pyr}(P))\\=&\begin{cases}\{(-\Min{P}{\nu},\nu)\in \Z \times N \mid \nu \in S_F(\mu-1)P\}\cup \{(0,\pm 1)\} &\text{ if } F(P)\neq \emptyset\\
\{(-\Min{P}{\nu},\nu)\in \Z \times N \mid \nu \in S_F(\mu-1)P\}\cup \{(0,-1)\} &\text{ if } F(P)=\emptyset.	
\end{cases}
\end{align*}
In particular, $F(2\mathrm{Pyr}(P))=\{1\} \times F(P)$ and for $F(P)\neq \emptyset$ we have that $2\mathrm{Pyr}(P)$ is canonically closed if and only if $P$ is canonically closed.
\end{coro}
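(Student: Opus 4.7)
The plan is to exploit the identity $\mu\mathrm{Pyr}(P)=\mathrm{cone}(P)\cap\{0\le x_0\le\mu\}$ and reduce everything to Corollary \ref{Fine_interior_cone} by pinpointing which dual lattice vectors $(n_0,n)\in\Z\times N$ give binding Fine interior constraints. A direct computation gives
\[
\Min{\mu\mathrm{Pyr}(P)}{(n_0,n)}=\min\bigl(0,\mu(n_0+\Min{P}{n})\bigr),
\]
which I would split into three families: $n=0$ (only $n_0=\pm 1$ are useful); $n\neq 0$ with $n_0\ge -\Min{P}{n}$, i.e.\ $(n_0,n)\in\check{\mathrm{cone}(P)}$; and $n\neq 0$ with $n_0<-\Min{P}{n}$. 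The integrality of $-\Min{P}{n}$, which uses that $P$ is a lattice polytope, keeps the last two regimes clean.

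For the slice formula I would argue as follows. The $n=0$ family forces the slab $1\le x_0\le\mu-1$. On the second family the Fine interior inequality becomes $\langle(x_0,x),(n_0,n)\rangle\ge 1$, which is exactly the defining inequality of $F(\mathrm{cone}(P))$. The main technical step is to show that every inequality from the third family is redundant on the slab $1\le x_0\le\mu-1$: by integrality the tightest case is $n_0=-\Min{P}{n}-1$, and a short computation shows the resulting inequality is weaker than the one at $(-\Min{P}{n},n)$ by exactly $\mu-x_0\ge 1$. Combined with Corollary \ref{Fine_interior_cone} this yields $F(\mu\mathrm{Pyr}(P))\cap\{x_0=\lambda\}=\{\lambda\}\times F(\lambda P)$ for $1\le\lambda\le\mu-1$ and emptiness otherwise. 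In particular $F(\mu\mathrm{Pyr}(P))\neq\emptyset$ forces $\mu\ge 2$, and at $\mu=2$ the slab collapses to $\{x_0=1\}$, giving $F(2\mathrm{Pyr}(P))=\{1\}\times F(P)$.

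For the support I would identify binding dual vectors slice-by-slice. The two $n=0$ vectors correspond to binding at the top and bottom of the slab: the top binds automatically once $F(\mu\mathrm{Pyr}(P))\neq\emptyset$, because then $F((\mu-1)P)\neq\emptyset$ by Lemma \ref{Dilations_and_support} applied to any witness slice, whereas the bottom binds iff $F(P)\neq\emptyset$. For $n\neq 0$, the vector $(-\Min{P}{n},n)$ belongs to $S_F(\mu\mathrm{Pyr}(P))$ iff $n\in S_F(\lambda P)$ for some $\lambda\in[1,\mu-1]$, and the monotonicity in Lemma \ref{Dilations_and_support} collapses this to $n\in S_F((\mu-1)P)$, producing the two cases in the claim.

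For the final equivalence, the facets of $2\mathrm{Pyr}(P)$ give $\Sigma_{2\mathrm{Pyr}(P)}[1]=\{(-1,0)\}\cup\{(-\Min{P}{\nu},\nu)\mid\nu\in\Sigma_P[1]\}$, namely the top facet and the side facets through the apex. Feeding $\mu-1=1$ into the support formula, the top normal is automatically in $S_F(2\mathrm{Pyr}(P))$ as soon as $F(P)\neq\emptyset$, while membership of a side normal reduces to $\nu\in S_F(P)$; hence $\Sigma_{2\mathrm{Pyr}(P)}[1]\subseteq S_F(2\mathrm{Pyr}(P))$ is equivalent to $\Sigma_P[1]\subseteq S_F(P)$, i.e.\ to $P$ being canonically closed. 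The main obstacle throughout is really the redundancy check for the third family in the slice formula; once that is in place, everything else follows by chasing definitions through Corollary \ref{Fine_interior_cone} and Lemma \ref{Dilations_and_support}.
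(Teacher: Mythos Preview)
Your approach is sound and in fact more transparent than the paper's. Both proofs reduce to Corollary~\ref{Fine_interior_cone}; the difference lies in how the ``extra'' constraints are eliminated. The paper argues geometrically that any support vector other than $(\pm1,0)$ attains its minimum on $\mu\mathrm{Pyr}(P)$ along a face of positive dimension, invoking the fact that for $\mu\in\Z$ the slices at heights $1$ and $\mu-1$ are lattice polytopes (and then a local-translation argument for $\mu\in\Q$). Your route bypasses this by computing $\Min{\mu\mathrm{Pyr}(P)}{(n_0,n)}=\min\bigl(0,\mu(n_0+\Min{P}{n})\bigr)$ explicitly and doing a direct slack comparison. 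This is cleaner: the redundancy of the third family on the slab is a one-line inequality rather than a case split on the rationality of $\mu$, and the integrality of $-\Min{P}{n}$ enters at exactly one place.

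One point you should make explicit when writing this up: for the support formula you verify that the listed vectors are in $S_F(\mu\mathrm{Pyr}(P))$, but you also need the reverse inclusion, i.e.\ that no $(n_0,n)$ with $n\neq 0$ and $n_0\neq -\Min{P}{n}$ can be a support vector. This drops out of your computations already. For $n_0\ge -\Min{P}{n}+1$ the same estimate gives $\langle(x_0,x),(n_0,n)\rangle\ge (n_0+\Min{P}{n})x_0+1\ge x_0+1\ge 2$ on the slab, while the required value is $1$; for $n_0\le -\Min{P}{n}-1$ your redundancy argument shows the slack against the inequality at $(-\Min{P}{n},n)$ is at least $\mu-x_0\ge 1$, so equality is never attained. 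With that sentence added, your plan yields a complete proof.
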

\begin{proof}
For $(x_0,x)\in F(\mu \mathrm{Pyr}(P))\subseteq \R \times M_\R$ we have
\begin{align*}
	\left<(x_0,x),(1,0)\right>=x_0\geq \Min{\mu\mathrm{Pyr}(P)}{(1,0)}+1=1
\end{align*}
and 
\begin{align*}
	\left<(x_0,x),(-1,0)\right>=-x_0\geq \Min{\mu\mathrm{Pyr}(P)}{(-1,0)}+1=-\mu+1.
\end{align*}
So we get $F(\mu\mathrm{Pyr}(P))\cap \{x_0=\lambda\}=\emptyset$ for $\lambda<1$ and $\lambda \geq \mu-1$ and support vectors $(0,\pm 1)$ in the appropriate cases. If we can show that all the other support vectors of $F(\mu\mathrm{Pyr}(P))$ attain their minimum on $\mu \mathrm{Pyr}(P)$ on a face of dimension $1$ or greater, we get the result as a corollary of \ref{Fine_interior_cone}. If $\mu \in \Z$, this is clear, because $\mu\mathrm{Pyr}(P)\cap \{x_0=1\}\cong P$ and $\mu\mathrm{Pyr}(P)\cap \{y=\mu-1\}\cong(\mu-1)P$ are lattice polytopes in this case, and so there is no addition support vector which attains its minimum only on a vertex of $\mu\mathrm{Pyr}(P)$. For $\mu \in \Q$ we get this, since the situation near vertices after a translation with a suitable vector from $\Q^{d+1}$ is locally the same as for $\floor{\mu}\mathrm{Pyr}(P)$ and the calculation of the Fine interior commutes with translations.
\end{proof}

We can note expect, that we can always commute Fine interior calculations with intersections with hyperplanes. Nevertheless, we have at least the partial result of the following lemma.

\begin{lemma}\label{Intersection_of_Fine_interior}
Let $P\subseteq \R \times M_\R$ be a rational $d$-polytope with $P\cap (\{1\} \times M_\R ) \neq \emptyset$ and $P\cap (\{-1\} \times M_\R ) \neq \emptyset$, and $P_0 \subseteq M_\R$ defined by $\{ 0 \} \times P_0 = P \cap (\{0\} \times M_\R )$. 
Then $\{ 0 \} \times F(P_0) \subseteq F(P) \cap (\{0\} \times M_\R )$.
\end{lemma}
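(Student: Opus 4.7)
The plan is to verify the inclusion pointwise: fix $x \in F(P_0)$ and show that $(0,x) \in F(P)$. Unwinding Definition \ref{Def_FineInt}, I need to establish
\begin{align*}
\langle (0,x),(n_0,n)\rangle \;=\; \langle x,n\rangle \;\geq\; \Min{P}{(n_0,n)}+1
\end{align*}
for every nonzero $(n_0,n) \in \Z \times N$. I split the verification into two cases according to whether $n$ vanishes.

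In the case $n \neq 0$, the decisive observation is that $\{0\} \times P_0 \subseteq P$, so taking the minimum of the pairing $\langle\cdot,(n_0,n)\rangle$ over the larger set $P$ can only decrease it. Since the pairing $\langle(0,z),(n_0,n)\rangle$ collapses to $\langle z,n\rangle$ independently of $n_0$, this yields $\Min{P}{(n_0,n)} \leq \Min{P_0}{n}$. Chaining this with the defining inequality $\langle x,n\rangle \geq \Min{P_0}{n}+1$ coming from $x \in F(P_0)$ gives the bound at once.

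In the case $n = 0$ (and hence $n_0 \neq 0$), the left-hand side is $0$, so what I must prove is $\Min{P}{(n_0,0)} \leq -1$. This is where the hypotheses $P \cap (\{\pm 1\}\times M_\R) \neq \emptyset$ come in: they force a point of $P$ with first coordinate $+1$ and one with first coordinate $-1$, so $\min\{y_0 \mid (y_0,y) \in P\} \leq -1$ and $\max\{y_0 \mid (y_0,y) \in P\} \geq 1$. Evaluating $\Min{P}{(n_0,0)} = \min\{n_0 y_0 \mid (y_0,y) \in P\}$ against whichever of the two endpoints matches the sign of $n_0$ gives $\Min{P}{(n_0,0)} \leq -|n_0| \leq -1$ as required.

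The main obstacle is really just psychological: one might be tempted to build a witness in $P_0$ by intersecting a segment joining a minimizer of $(n_0,n)$ on $P$ with a point of $P$ on the slice $\{\pm 1\}\times M_\R$, and feeding it into the Fine interior inequality on $P_0$. That route works but is unnecessary; the trivial slice inclusion $\{0\}\times P_0 \subseteq P$ is already enough for the generic direction $n \neq 0$, and the $\pm 1$ hypotheses are needed only to dispatch the degenerate direction $n=0$.
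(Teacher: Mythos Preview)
Your proof is correct and follows essentially the same approach as the paper: the case split on $n=0$ versus $n\neq 0$, the use of the slice inclusion $\{0\}\times P_0\subseteq P$ to compare minima when $n\neq 0$, and the use of the $\{\pm 1\}$-slice hypotheses to bound $\Min{P}{(n_0,0)}\leq -|n_0|$ when $n=0$, all match the paper exactly. The only cosmetic difference is that the paper phrases the argument as a contrapositive (starting from $(0,x)\notin F(P)$ and concluding $x\notin F(P_0)$), whereas you prove the inclusion directly.
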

\begin{proof}
If $(0,x)\notin F(P) \cap (\{0\} \times M_\R )$, then we have some $(\nu_0,\nu)\in (\Z \times N)\setminus \{(0,0)\}$ with
\begin{align*}
	\left< (0,x),(\nu_0,\nu)\right> < \Min{P}{(\nu_0,\nu)}+1.
\end{align*}
Note that $\nu \neq 0$, because otherwise we had 
\begin{align*}
	\Min{P}{(\nu_0,0)}+1\leq-|\nu_0|+1\leq 0=\left<(0,x),(\nu_0,0)\right>
\end{align*}
for all $\nu_0\neq 0$, since $P\cap P\cap (\{1\} \times M_\R ) \neq \emptyset\neq P\cap P\cap (\{-1\} \times M_\R )$. So we get $\nu\neq 0$ with
\begin{align*}
	\left<x,\nu\right>=\left< (0,x),(\nu_0,\nu)\right> <  \Min{P}{(\nu_0,\nu)}+1 \leq& \Min{\{0\} \times P_0}{(\nu_0,\nu)}+1=\Min{P_0}{\nu}+1
\end{align*}
and so $x\notin F(P_0)$.
\end{proof}


As the main result of the section we show now, that everything is fine if we have a lattice polytopes of lattice width $2$ whose middle-polytope is also a lattice polytope.

\begin{prop}\label{FineInterior_Width2}
Let $P\subseteq \R \times M_\R$ be lattice polytope of lattice width $2$ with $P\subseteq [-1,1] \times M_\R$, such that $\{0\} \times P_0 :=P\cap (\{0\} \times M_\R)$ is also a lattice polytope.\\
Then
\begin{align*}
F(P)=\{0\} \times F(P_0)
\end{align*}
and if $F(P)\neq \emptyset$, then
\begin{align*}
S_F(P_0)=\{\nu \in N\setminus\{0\} \mid \exists \nu_0\in \Z, (\nu_0,\nu)\in S_F(P)\}
\end{align*}
and $S_F(P)$ is given as union of $\{(-1,0),(1,0)\}$ with
\begin{align*}
\{(\nu_0,\nu) \in \Z \times N \mid \nu\in S_F(P_0), \Min{P}{(\nu_0,\nu)}=\Min{\{0\} \times P_0}{(\nu_0,\nu)}\}.
\end{align*}
\end{prop}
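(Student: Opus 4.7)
The plan is to first show $F(P)\subseteq \{0\}\times M_\R$ using the lattice width hypothesis, then upgrade Lemma \ref{Intersection_of_Fine_interior} to an equality by producing, for every $n\in N\setminus\{0\}$, an integer lift $\nu_0\in \Z$ with $\Min{P}{(\nu_0,n)}=\Min{P_0}{n}$, and finally read off both support descriptions by case analysis. For the first step, $P\subseteq [-1,1]\times M_\R$ gives $\Min{P}{(\pm 1,0)}\geq -1$, while $\lw{P}=2$ applied to the direction $(1,0)$ forces $-\Min{P}{(-1,0)}-\Min{P}{(1,0)}\geq 2$; both inequalities must therefore be equalities, so any $(x_0,x)\in F(P)$ satisfies $x_0\geq 0$ and $-x_0\geq 0$, i.e.\ $x_0=0$.

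The main technical step is the claim: for every $n\in N\setminus\{0\}$ some integer $\nu_0$ realizes $\Min{P}{(\nu_0,n)}=\Min{P_0}{n}$. Writing $P_{\pm 1}:=\{x\in M_\R\mid (\pm 1,x)\in P\}$ for the top and bottom slices (faces of $P$, hence lattice polytopes), convexity of $P$ gives $\tfrac12(P_1+P_{-1})\subseteq P_0$, so $2\Min{P_0}{n}\leq \Min{P_1}{n}+\Min{P_{-1}}{n}$. Hence the closed interval $[\Min{P_0}{n}-\Min{P_1}{n},\,\Min{P_{-1}}{n}-\Min{P_0}{n}]$ has integer endpoints and non-negative length, so it contains some $\nu_0\in\Z$. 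The vertices of the lattice polytope $P$ all lie at $x_0\in\{-1,0,1\}$, and the two defining inequalities of $\nu_0$ together with the trivial bound $\left<v,n\right>\geq \Min{P_0}{n}$ for $v\in P_0$ verify $\nu_0 x_0+\left<x,n\right>\geq \Min{P_0}{n}$ at every vertex; combined with $\{0\}\times P_0\subseteq P$ this yields the claimed equality. Using this, if $(0,x)\in F(P)$, then for each $n\in N\setminus\{0\}$ the lift gives $\left<x,n\right>\geq \Min{P}{(\nu_0,n)}+1=\Min{P_0}{n}+1$, so $x\in F(P_0)$; together with Lemma \ref{Intersection_of_Fine_interior} and the first step this proves $F(P)=\{0\}\times F(P_0)$.

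The support descriptions then follow by case analysis on $(\nu_0,\nu)\in S_F(P)$. For $\nu=0$ one has $\Min{F(P)}{(\nu_0,0)}=0$ while $\Min{P}{(\nu_0,0)}=-|\nu_0|$, forcing $|\nu_0|=1$. For $\nu\neq 0$ the identity $\Min{F(P)}{(\nu_0,\nu)}=\Min{F(P_0)}{\nu}$ (independent of $\nu_0$), combined with $\Min{P}{(\nu_0,\nu)}\leq \Min{P_0}{\nu}$ and $\Min{F(P_0)}{\nu}\geq \Min{P_0}{\nu}+1$, forces both $\nu\in S_F(P_0)$ and $\Min{P}{(\nu_0,\nu)}=\Min{\{0\}\times P_0}{(\nu_0,\nu)}$. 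The projection description of $S_F(P_0)$ is then immediate: the $\supseteq$ direction is this case analysis, and the $\subseteq$ direction reuses the integer lift to produce a compatible $\nu_0$ for every $\nu\in S_F(P_0)$. The main obstacle is the existence of the integer lift $\nu_0$, since the dual vector $(0,n)$ alone gives only the strictly weaker bound $\Min{P}{(0,n)}\leq \Min{P_0}{n}$ in the defining inequality for $F(P)$.
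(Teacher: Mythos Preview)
Your proof is correct and follows the same overall strategy as the paper's: reduce to the slice $x_0=0$, invoke Lemma~\ref{Intersection_of_Fine_interior} for one inclusion, and for the other inclusion construct for each $n\in N\setminus\{0\}$ an integer lift $\nu_0$ with $\Min{P}{(\nu_0,n)}=\Min{P_0}{n}$. The only noteworthy difference is in how this lift is produced: the paper fixes a vertex $e$ of $P_0$ realizing $\Min{P_0}{n}$ and a vertex $(1,f')$ of $P$ minimizing $\langle f',n\rangle$, setting $\nu_0=\langle e-f',n\rangle$ (which is your left endpoint $\Min{P_0}{n}-\Min{P_1}{n}$), whereas you isolate the full interval $[\Min{P_0}{n}-\Min{P_1}{n},\,\Min{P_{-1}}{n}-\Min{P_0}{n}]$ via the convexity inclusion $\tfrac12(P_1+P_{-1})\subseteq P_0$ and verify the inequality at all three vertex heights. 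Your version makes the role of the bottom slice $P_{-1}$ explicit, which the paper leaves implicit in the phrase ``$(\nu_0,\nu)$ defines a hyperplane supporting $P$''; otherwise the arguments coincide.
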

\begin{proof}
Since $P\subseteq [1,1] \times M_\R$ we have $F(P)=F(P)\cap (\{0\} \times M_\R)$ and so we have $F(P)\supseteq \{ 0 \} \times F(P_0)$ by \ref{Intersection_of_Fine_interior}.
	
Now we show $F(P)\subseteq F(P_0)\times \{0\}$. Since $P\subseteq [1,1] \times M_\R$ we have $F(P)\subseteq \{ 0 \} \times P_0$ and it is enough to show that $(0,x)\notin F(P)$ for all $x\notin F(P_0)$. For $x\in M_\R \setminus F(P_0)$ we have some $0\neq \nu \in N$ with
\begin{align*}
\left<x,\nu\right> < \Min{P_0}{\nu}+1.
\end{align*}
Since $P_0$ is a lattice polytope, we have a vertex $e\in M$ of $P_0$ with $\Min{P_0}{\nu}=\left<e,\nu\right>$. Let $f=(1,f')\in M$ be a vertex of $P$ such that $\nu_0:=\left<e-f',\nu\right>\in \Z$ is maximal. Then we have
\begin{align*}
\left<(1,f'),(\nu_0,\nu)\right>=\nu_0+\left<f',\nu\right>=\left<e,\nu\right>=\Min{P_0}{\nu}
\end{align*}
and since $\left<(0,e),(\nu_0,\nu)\right>=\left<e,\nu\right>=\Min{P_0}{\nu}$ and $\nu_0$ was chosen maximal, we get that $(\nu_0,\nu)$ defines a hyperplane supporting $P$ on on a face containing $e$ and $f$ and we have
\begin{align*}
\Min{P}{(\nu_0,\nu)}=\Min{P_0}{\nu}
\end{align*}
and
\begin{align*}
\left<(0,x),(\nu_0,\nu)\right>=\left<x,\nu\right><\Min{P_0}{\nu}+1=\Min{P}{(\nu_0,\nu)}+1,
\end{align*}
i.e. $(0,x)\notin F(P)$.

By the calculations above we get also that for every $\nu \in S_F(P_0)$ there exists a $\nu_0\in \Z$, for example with $\nu_0$ defined as above, with $(\nu_0,\nu)\in S_F(P)$. Since $P_0$ is a lattice polytope and $F(P)\subsetneq P_0$, we have $\Min{P}{(\nu_0,\nu)}=\Min{ \{ 0 \} \times P_0}{(\nu_0,\nu)}$ for all $(\nu_0,\nu)\in S_F(P)\setminus \{(\pm 1,0)\}$, and from $F(P)=\{ 0 \} \times F(P_0)$ we get $\nu \in S_F(P_0)$.
\end{proof}

As a corollary we can use two-dimensional results from \cite{BS24}  to explain some experimental results on the Fine interior of lattice $3$-polytopes in \cite{BKS22}. 

\begin{coro}
	Let  $P \subseteq \R^3$ be a lattice $3$-polytope with $|\interior{P}\cap \Z^3| \leq 1$. Then $\dim F(P)\neq 2$.
\end{coro}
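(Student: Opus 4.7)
The plan is to contradict $\dim F(P)=2$ by a two-dimensional reduction via Proposition~\ref{FineInterior_Width2}.

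Suppose $\dim F(P) = 2$. Then $F(P)$ is a rational $2$-polytope in $\R^3$ and must lie in a rational affine hyperplane $\{x\in M_\R : \langle x,\nu\rangle = c\}$ for some primitive $\nu\in N$. Since both inequalities $\langle x,\pm\nu\rangle \geq \Min{P}{\pm\nu}+1$ from Definition~\ref{Def_FineInt} have to be tight on $F(P)$, we obtain $-\Min{P}{-\nu}-\Min{P}{\nu}=2$, so $\lw{P}\leq 2$. A direct check shows that $\lw{P}=1$ would make the analogous pair of inequalities inconsistent and therefore force $F(P)=\emptyset$; hence $\lw{P}=2$ and $\nu$ is a lattice width direction. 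After a unimodular change of coordinates I may assume $\nu=e_1^{*}$ and $P\subseteq [-1,1]\times M'_\R$ with $M'\cong\Z^2$.

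Each of the hyperplanes $\{x_0=\pm 1\}$ supports a face of $P$, so every lattice point of $P$ at height $\pm 1$ lies on $\partial P$; since $0$ is the only integer in the open interval $(-1,1)$, every interior lattice point of $P$ sits at height $0$. Setting $P_0:=\{q\in M'_\R : (0,q)\in P\}$, it follows that $|\interior{P_0}\cap \Z^2|=|\interior{P}\cap \Z^3|\leq 1$. When $P_0$ happens to be a lattice polygon, Proposition~\ref{FineInterior_Width2} applies and yields $F(P)=\{0\}\times F(P_0)$, so $\dim F(P_0)=2$; the two-dimensional results from \cite{BS24} rule out a lattice polygon having both $\dim F(P_0)=2$ and at most one interior lattice point, so we obtain the desired contradiction.

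The main obstacle is the case where the middle slice $P_0$ fails to be a lattice polygon, which happens precisely when $P$ has an edge joining a vertex at $x_0=-1$ to a vertex at $x_0=+1$ whose midpoint is non-integral. Here Proposition~\ref{FineInterior_Width2} does not apply verbatim. Nevertheless, $F(P)\subseteq\{x_0=0\}$ still forces $F(P)=\{0\}\times F'$ for a $2$-dimensional rational polygon $F'\subseteq M'_\R$, and since $F(P)\subseteq\interior{P}$ every lattice point of $F'$ is an interior lattice point of $P$, giving $|F'\cap \Z^2|\leq 1$. Combining the inclusion $F(P_0)\subseteq F'$ from Lemma~\ref{Intersection_of_Fine_interior} with the rational-polygon version of the two-dimensional statement from \cite{BS24} applied to $P_0$ then produces the same contradiction.
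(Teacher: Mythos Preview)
Your reduction to lattice width $2$ and the case when $P_0$ is a lattice polygon are correct; that case is in fact easier than you indicate, since for a lattice polygon $F(P_0)$ is the convex hull of the interior lattice points, so $|\interior{P_0}\cap\Z^2|\le 1$ already forces $\dim F(P_0)\le 0$ without appealing to \cite{BS24}.

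The genuine gap is in the half-integral case. Lemma~\ref{Intersection_of_Fine_interior} gives the inclusion $F(P_0)\subseteq F'$, not $F'\subseteq F(P_0)$. Consequently any statement about $F(P_0)$ drawn from \cite{BS24} says nothing about $\dim F'$; likewise your correct observation $|F'\cap\Z^2|\le 1$ does not constrain the dimension of the rational polygon $F'$. An \emph{inner} bound on $F(P)$ cannot produce the contradiction you need, and the sentence ``combining the inclusion $F(P_0)\subseteq F'$ \dots\ then produces the same contradiction'' has no content as written.

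The paper closes this gap by working in the opposite direction: it \emph{enlarges} $P$. The classification of maximal half-integral polygons with at most one interior lattice point in \cite{BS24} shows that $P_0$ is contained in some lattice polygon $\bar P_0$ with $\dim F(\bar P_0)\le 1$ (concretely, a lattice polygon inside a strip of width $2$, or $3\Delta_2$). Then $\bar P:=\conv{P,\{0\}\times\bar P_0}\subseteq[-1,1]\times\R^2$ is a lattice $3$-polytope whose middle slice is the lattice polygon $\bar P_0$, so Proposition~\ref{FineInterior_Width2} applies to $\bar P$ and yields $\dim F(\bar P)\le 1$. Now $P\subseteq\bar P$ implies $F(P)\subseteq F(\bar P)$ directly from Definition~\ref{Def_FineInt}, whence $\dim F(P)\le 1$. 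The missing idea is precisely this enlargement, which makes both the hypothesis of Proposition~\ref{FineInterior_Width2} and the monotonicity inclusion point in the useful direction.
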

\begin{proof}
	Suppose $\dim F(P) = 2$. Then $P$ must have lattice width $2$ and we can assume $P\subseteq [-1,1] \times \R^2$. We have that the half-integral polygon $P_0\subseteq \R^2$ definied by $\{0\} \times P_0 := P \cap (\{0\} \times \R^2)$ has at least $1$ interior integral point. By classification results of maximal half-integral polygons in \cite[section 5 and 6]{BS24} we know, that then we can assume $P_0 \subseteq \R \times [-1,1]$ or $P_0 \subseteq \conv{(0,0), (3,0), (0,3)}$. In particular, we can assume that $P_0$ is part of a lattice polygon $\bar{P_0}$ which has Fine interior of dimension smaller than $1$. Thus by \ref{FineInterior_Width2} we get $\dim F(\conv{P,\bar{P_0}})<2$ for the lattice polytope $\conv{P,\bar{P_0}}$ and from $F(P) \subseteq F(\conv{P,\bar{P_0}})$ we get the contradiction $\dim F(P)<2$.
\end{proof}

The following example shows that we cannot omit central premises in \ref{FineInterior_Width2}.

\begin{ex}
	For  $P:=\conv{(-1,-1,-1),(1,0,-1),(0,1,-1),(0,0,1)}$, we have
	\begin{align*}
		F(P\cap \{x_3=0\})=\emptyset \neq \{(0,0,0)\}=F(P)\cap \{x_3=0\}=F(P).
	\end{align*}
	Note, that we have $\lw{P}=2$ but
	\begin{align*}
		P\cap \{x_3=0\}=\conv{(-1/2,-1/2),(1/2,0,0),(0,1/2,0)}
	\end{align*}
	is not a lattice polytope. If the intersection gives a lattice polytope, we also cannot commute the calculation of the Fine interior and the intersection in general. See for example
	\begin{align*}
		&F(2P\cap \{x_3=0\})=F((-1,-1),(1,0,0),(0,1,0))=\{(0,0,0)\}\\\neq & \conv{(-1/2,-1/2,0),(1/2,0,0),(0,1/2,0)}=F(2P)\cap \{x_3=0\}.
	\end{align*}
\end{ex}

We will later use the following corollary which helps us to understand the Fine interior for dilations of lattice polytopes of lattice width $1$.

\begin{coro}\label{FineInterior_Width1}
Let P be a lattice polytope of lattice width 1, $P\subseteq [0,1] \times M_\R$ and $\{\frac{1}{2}\} \times P_{1/2}:=P \cap (\{\frac{1}{2}\} \times M_\R)$. Then $2P_{1/2}$ is a lattice polytope with
\begin{align*}
F(2P)=\{1\} \times F(2P_{1/2}).
\end{align*}
Moreover, if $F(2P)\neq \emptyset$, then we get that $2P$ is canonically closed if and only if $2P_{1/2}$ is canonically closed.
\end{coro}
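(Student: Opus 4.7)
The plan is to apply Proposition \ref{FineInterior_Width2} to the translated polytope $\tilde{P}:=2P-(1,0)$. Since $P$ is a lattice polytope with $P\subseteq [0,1]\times M_\R$ and $\lw{P}=1$ (with lattice width direction $(1,0)$), $\tilde{P}$ is a lattice polytope inside $[-1,1]\times M_\R$ with $\lw{\tilde{P}}=2\lw{P}=2$, and $\tilde{P}\cap(\{0\}\times M_\R)=\{0\}\times 2P_{1/2}$.

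First I would verify that $2P_{1/2}$ is a lattice polytope: since vertices of $P$ are lattice points with $x_0\in\{0,1\}$, every vertex of $P_{1/2}$ is the midpoint of an edge of $P$ joining a vertex in $\{0\}\times M$ to one in $\{1\}\times M$, and such a midpoint sits in $\{1/2\}\times \tfrac{1}{2}M$. Proposition \ref{FineInterior_Width2} now yields $F(\tilde{P})=\{0\}\times F(2P_{1/2})$; translating by $(1,0)$ gives the desired identity $F(2P)=\{1\}\times F(2P_{1/2})$. Since the support $S_F$ is invariant under lattice translations, the same proposition provides the corresponding description of $S_F(2P)$.

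For the canonically closed equivalence (assuming $F(2P)\neq \emptyset$), I would split the primitive ray generators of $\Sigma_{2P}$ into horizontal ones, sitting in $\{(\pm 1,0)\}\subseteq S_F(2P)$, and non-horizontal ones $(\nu_0,\nu)$ with $\nu\neq 0$. For a non-horizontal primitive $(\nu_0,\nu)\in \Sigma_{2P}[1]$ with associated facet $G$, I would pick vertices $(0,v_0),(2,v_2)\in G$ with $v_0,v_2\in 2M$; these exist since $G$ is non-horizontal and every vertex of $2P$ lies at $x_0\in\{0,2\}$. Equating $\langle(0,v_0),(\nu_0,\nu)\rangle$ and $\langle(2,v_2),(\nu_0,\nu)\rangle$ yields $2\nu_0=\langle v_0-v_2,\nu\rangle$, and since $v_0-v_2\in 2M$ the gcd of the coordinates of $\nu$ divides $\nu_0$, forcing $\nu$ to be primitive in $N$ and to coincide with the primitive normal of the facet $G\cap(\{1\}\times M_\R)$ of $\{1\}\times 2P_{1/2}$.

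Because the minimum of $\langle\cdot,(\nu_0,\nu)\rangle$ on $2P$ is attained on the entire facet $G$, in particular at points with $x_0=1$, the compatibility condition in the description of $S_F(2P)$ holds automatically, so $(\nu_0,\nu)\in S_F(2P)$ if and only if $\nu\in S_F(2P_{1/2})$. Conversely, for every primitive $\nu\in \Sigma_{2P_{1/2}}[1]$ with corresponding facet $F'$ of $2P_{1/2}$, the set $\{1\}\times F'$ lies in $\partial(2P)$ and hence is contained in some non-horizontal facet of $2P$ whose primitive normal must then be of the form $(\nu_0,\nu)$. Combining these observations yields the desired equivalence. The main obstacle I anticipate is the primitivity argument for $\nu$; once that is settled, the rest is straightforward bookkeeping using Proposition \ref{FineInterior_Width2}.
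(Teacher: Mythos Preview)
Your proposal is correct and follows essentially the same route as the paper: both apply Proposition~\ref{FineInterior_Width2} to (a translate of) $2P$ to obtain the Fine interior identity, and then exploit the fact that $2P$ has no vertices at height $x_0=1$ to set up a bijection between the non-horizontal facet normals of $2P$ and the facet normals of $2P_{1/2}$ compatible with the description of $S_F(2P)$. Your explicit primitivity argument for $\nu$ (via $v_0-v_2\in 2M$) and your verification that every facet of $2P_{1/2}$ lifts to a facet of $2P$ spell out details that the paper's two-line proof leaves implicit, but the underlying strategy is the same.
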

\begin{proof}
From \ref{FineInterior_Width2} we get $F(2P)=\{1\} \times F(2P_{1/2})$ and since $2P$ has no vertices with first coordinate $1$, we have for all $\nu \in S_F(2P_{1/2})$ exactly one $\nu_0 \in \Z$ with $(\nu_0,\nu)\in S_F(2P)$ and $(\nu_0,\nu)$ attains its minimum on $P$ on an edge. So $\Sigma_{P}[1]\in S_F(2P)$ if and only if $\Sigma_{P_{1/2}}[1] \in S_F(2P_{1/2})$.
\end{proof}

We can even go further for $d=2$ since the Fine interior of a lattice polygon is the convex hull of the interior lattice points by \cite[2.9]{Bat17} and every lattice polygon with non-empty Fine interior is canonically closed by \ref{Ex_canonically_closed_dim2}.

\begin{coro}\label{FineInterior_Width1_dim3}
Let $P\subseteq \R \times M_\R$ be a lattice $3$-polytope of lattice width $1$.\\
Then $F(2P)=\conv{\mathrm{int}(2P)\cap M}$ and if $F(2P)\neq \emptyset$, then $2P$ is canonically closed.
\end{coro}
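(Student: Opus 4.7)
The plan is to reduce to two dimensions via Corollary \ref{FineInterior_Width1} and then invoke the known description of the Fine interior of a lattice polygon together with Example \ref{Ex_canonically_closed_dim2}.

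First, I would pick a lattice width direction $n_{lw}\in N$ of $P$ and apply an affine unimodular transformation so that, without loss of generality, $P\subseteq [0,1]\times M_\R'$ with $M'\cong \Z^2$; the Fine interior, the property of being a lattice polygon, and being canonically closed are all invariant under such transformations. Setting $\{\tfrac{1}{2}\}\times P_{1/2}:=P\cap(\{\tfrac{1}{2}\}\times M_\R')$, Corollary \ref{FineInterior_Width1} then tells me that $2P_{1/2}$ is a lattice polygon and that $F(2P)=\{1\}\times F(2P_{1/2})$.

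Next, I would invoke \cite[2.9]{Bat17}, which identifies the Fine interior of a lattice polygon with the convex hull of its interior lattice points, to write $F(2P_{1/2})=\conv{\interior{2P_{1/2}}\cap M'}$. It then remains to match interior lattice points: since $2P\subseteq[0,2]\times M_\R'$ and the slices at $x_0=0,2$ form boundary facets, every interior lattice point of $2P$ must have first coordinate $1$. Together with the convex-geometric fact that $\interior{2P}\cap\{x_0=1\}=\{1\}\times\interior{2P_{1/2}}$, which holds because the hyperplane $\{x_0=1\}$ meets the interior of the full-dimensional polytope $2P$, this yields $\interior{2P}\cap M=\{1\}\times(\interior{2P_{1/2}}\cap M')$, from which the claimed formula $F(2P)=\conv{\interior{2P}\cap M}$ follows.

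For the second assertion, if $F(2P)\neq\emptyset$ then $F(2P_{1/2})\neq\emptyset$, so by Example \ref{Ex_canonically_closed_dim2} the lattice polygon $2P_{1/2}$ is canonically closed, and the equivalence in Corollary \ref{FineInterior_Width1} then transfers this property to $2P$. The only mildly delicate point is the identification of interior lattice points with those of the middle slice; the rest is a direct composition of the previously established corollaries with Batyrev's two-dimensional result, so I do not expect a serious obstacle.
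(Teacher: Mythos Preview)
Your proposal is correct and follows exactly the approach the paper indicates: reduce via Corollary~\ref{FineInterior_Width1} to the middle polygon $2P_{1/2}$, then apply \cite[2.9]{Bat17} and Example~\ref{Ex_canonically_closed_dim2}. The paper leaves the identification $\interior{2P}\cap M=\{1\}\times(\interior{2P_{1/2}}\cap M')$ implicit, so your explicit verification of this step is a welcome elaboration rather than a deviation.
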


We finish this section with an important example for the situation in the corollary.

\begin{ex}
Let $\Delta\subseteq \R^3$ be an empty lattice $3$-simplex of normalized volume $q$, i.e. $|\Delta \cap \Z^3|=4$. Then we have by \cite{Whi64} some $p\in \Z$ with $gcd(p,q)=1$ and
\begin{align*}
\Delta \cong \Delta(p,q) := \conv{(0,0,0), (1,0,0),(0,0,1),(p,q,1)},
\end{align*}
in particular, the lattice width of $\Delta$ is $1$. By \ref{FineInterior_Width1_dim3} we get
\begin{align*}
F(2\Delta(p,q))=&\conv{\mathrm{int}(2\Delta(p,q))\cap\Z^3}\\
=&\conv{\mathrm{int}(\conv{(0,0),(1,0),(p,q),(p+1,q)})\cap \Z^2}\times \{1\}\\
=&\conv{\left\{\left( \ceil{ \frac{jp}{q} },j,1\right) \mid 0< j< q\right\}}.
\end{align*}
Thus $F(2\Delta(p,q))=\emptyset$ if and only if $q=1$. If the interior lattice points are collinear, we get after a shearing $p=\pm 1$. So we get for $q>1$
\begin{align*}
\dim(F(2\Delta(p,q)))=
\begin{cases}
	0 & \text{ if } q=2\\
	1 & \text{ if } q\geq 3 \text{ and } p\equiv \pm 1 \mod q\\
	2 & \text{ else. }
\end{cases}
\end{align*}

\end{ex}

\section{Special multipliers of a lattice polytope}

In this section we focus on special dilations of a rational $d$-polytope $P$ corresponding to vertices of $\mathcal{F}(P)$.
	
\begin{defi}
Let $P\subseteq M_\R$ be a rational $d$-polytope, $\{(\mu_1,p_1),\dotsc,(\mu_n,p_n)\}$ the set of vertices of $\mathcal{F}(P)\subseteq \R \times M_\R$.\\
We call every $\mu_i$ a \textit{special multipier of $P$}, $\mu(P):= \min_i \mu_i$ the \textit{minimal multiplier}, and $\mu_{max}(P):= \max_i \mu_i$ the \textit{maximal multiplier of $P$}.
\end{defi}

Since $P$ is rational, we now directly get the rationality of all special multipliers.

\begin{coro}\label{rationality_multipliers}
Let $P\subseteq M_\R$ be a rational $d$-polytope and $(-\Min{P}{\nu_k}, \nu_k)\in \Q^{d+1}$ for $1\leq k\leq d+1$  linear independent vectors defining a vertex $(\mu_i, p_i)$ of the polyhedron $\mathcal{F}(P)\subseteq \R \times M_\R$.\\ Then we have
\begin{align*}
\mu_i=\frac{\sum_{j=1}^{d+1}(-1)^{j+d+1}\begin{vmatrix}\nu_1 & \dots &  \hat{\nu_j} & \dots & \nu_{d+1}\end{vmatrix}}{\sum_{j=1}^{d+1}(-1)^{j+d}\Min{P}{\nu_j}\begin{vmatrix}\nu_1 & \dots &  \hat{\nu_j} & \dots & \nu_{d+1}\end{vmatrix}}\in \mathbb{Q},
\end{align*}
were the vectors $\hat{\nu_j}$ are canceled.
\end{coro}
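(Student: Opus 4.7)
The plan is to recognize the vertex defining equations as a square linear system and solve them by Cramer's rule. Since $(\mu_i,p_i)$ is a vertex of the $(d+1)$-dimensional polyhedron $\mathcal{F}(P)\subseteq \R\times M_\R$, it lies on exactly $d+1$ facets whose normals are, by Theorem \ref{main_theorem}, the given linearly independent vectors $(-\Min{P}{\nu_k},\nu_k)$. Tightness of the corresponding inequalities yields the system
\begin{align*}
-\mu_i\Min{P}{\nu_k}+\langle p_i,\nu_k\rangle=1,\qquad k=1,\dotsc,d+1,
\end{align*}
which I rewrite as $A\,(\mu_i,p_i)^{\top}=(1,\dotsc,1)^{\top}$, where $A$ is the $(d+1)\times(d+1)$ matrix whose $k$-th row is $(-\Min{P}{\nu_k},\nu_k)$. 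Linear independence of the normals is exactly $\det A\neq 0$, so the system has a unique solution.

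Applying Cramer's rule to the first coordinate gives $\mu_i=\det(A')/\det(A)$, where $A'$ is obtained from $A$ by replacing its first column (the entries $-\Min{P}{\nu_k}$) with $(1,\dotsc,1)^{\top}$. I then expand both determinants along the first column by Laplace expansion. The resulting cofactors are, up to sign, precisely the $d\times d$ minors
\begin{align*}
\begin{vmatrix}\nu_1 & \dots & \hat{\nu_j} & \dots & \nu_{d+1}\end{vmatrix}
\end{align*}
appearing in the statement, and collecting the pieces reproduces the stated formula. Rationality of $\mu_i$ is then immediate: since $P$ is rational, every entry of $A$ and $A'$ lies in $\Q$, hence so does the quotient.

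The step that I expect to require the most care is sign bookkeeping. A direct Laplace expansion gives factors $(-1)^{j+1}$ rather than the $(-1)^{j+d+1}$ and $(-1)^{j+d}$ appearing in the claim; these differ by a global factor of $(-1)^d$ which cancels between numerator and denominator, so the two formulas agree. One can justify the convention used in the statement by treating the $\nu_k$ as columns of the relevant $d\times d$ blocks (or equivalently by reversing the order of rows in the Laplace expansion), and this choice can be pinned down by checking the case $d=1$ by hand. Beyond this cosmetic sign issue, everything else is a routine application of Cramer's rule.
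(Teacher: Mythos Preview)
Your proposal is correct and is essentially the same approach as the paper, which simply says the formula ``follows directly from Cramer's rule and the Laplace expansion of the occurring determinants''; you have merely spelled out that one-line proof in detail, including the sign bookkeeping. One small imprecision: a vertex need not lie on \emph{exactly} $d+1$ facets (only on at least $d+1$), but this is harmless since you immediately work with the $d+1$ linearly independent normals provided in the hypothesis.
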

\begin{proof}
This follows directly from Cramer's rule and the Laplace expansion of the occurring determinants.
\end{proof}

In particular, we get now an easy combinatorial proof for the characterization of the minimal multiplier in \cite[3.8.]{Bat23} and for its rationality proven in \cite[3.2.]{Bat24} by methods from toric geometry.
	
\begin{coro}\label{min_mult_rational} Let $P\subseteq M_\R$ be a full dimensional rational polytope.\\
Then $\mu(P)\in \mathbb{Q}$ and $0\leq \dim F(\lambda P)\leq d-1$ if and only if $\lambda=\mu(P)$.
\end{coro}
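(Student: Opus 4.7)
The plan is to deduce both claims from Theorem \ref{main_theorem}, which identifies $\{\lambda\} \times F(\lambda P)$ with the slice $\mathcal{F}(P) \cap \{x_0 = \lambda\}$ of the rational polyhedron $\mathcal{F}(P) \subseteq \R \times M_\R$ and whose recession cone $\mathrm{cone}(P) = \R_{\geq 0}(\{1\} \times P)$ lies in $\{x_0 \geq 0\}$. Rationality of $\mu(P)$ is then immediate from Corollary \ref{rationality_multipliers}, since the vertices of the rational polyhedron $\mathcal{F}(P)$ have rational coordinates. The real content is the dimension dichotomy, which I would establish by a case analysis on whether $\lambda$ is below, equal to, or above $\mu(P)$.

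For $\lambda < \mu(P)$, every point of $\mathcal{F}(P)$ is a vertex plus a recession vector with non-negative first coordinate, hence $\mathcal{F}(P) \subseteq \{x_0 \geq \mu(P)\}$ and the slice is empty. For $\lambda = \mu(P)$, the hyperplane $\{x_0 = \mu(P)\}$ supports $\mathcal{F}(P)$ and meets it in at least the vertex realizing the minimum, so the slice is a non-empty face $F_0$. Since $P$ is bounded, every non-zero ray of the recession cone has strictly positive first coordinate, so $F_0$ contains no ray and is compact. By Theorem \ref{main_theorem} the polyhedron $\mathcal{F}(P)$ has no bounded facets, which forces $\dim F_0 \leq d-1$.

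For $\lambda > \mu(P)$ I plan to exhibit an interior point of $\mathcal{F}(P)$ with first coordinate $\lambda$; then the slice contains a relatively open neighborhood in $\{x_0 = \lambda\}$ and has dimension $d$. Fix a vertex $v = (\mu(P), p_0)$ and choose any $x^\ast \in \mathrm{int}(P)$; then for every $\nu \in \Sigma_P^{\mathrm{can}}[1]$ one has $\langle x^\ast, \nu \rangle - \Min{P}{\nu} > 0$. Hence moving from $v$ in direction $(1, x^\ast)$ strictly increases the left-hand side of each defining inequality of $\mathcal{F}(P)$: inequalities active at $v$ pass from equality to strict inequality, while those already strict remain strict. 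Consequently $v + t(1, x^\ast) \in \mathrm{int}(\mathcal{F}(P))$ for every $t > 0$, and taking $t = \lambda - \mu(P)$ gives the desired point. The main obstacle is precisely this last case, since one needs a direction from the bottom vertex pointing truly into the interior rather than along a facet; the choice $(1,x^\ast)$ with $x^\ast \in \mathrm{int}(P)$ works because each $\nu \in \Sigma_P^{\mathrm{can}}[1]$ is non-zero, so strict interiority of $x^\ast$ in $P$ translates into strict positivity of all constraint derivatives.
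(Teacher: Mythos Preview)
Your proof is correct and follows essentially the same route as the paper: both deduce everything from Theorem~\ref{main_theorem} by analyzing how the hyperplanes $\{x_0=\lambda\}$ slice the $(d{+}1)$-dimensional polyhedron $\mathcal{F}(P)$, with rationality coming from Corollary~\ref{rationality_multipliers}. The only noteworthy difference is in which consequence of Theorem~\ref{main_theorem} is invoked at $\lambda=\mu(P)$: the paper argues that the minimizing face cannot be a facet because no facet normal $(-\Min{P}{\nu},\nu)$ has $\nu=0$ (so no facet is horizontal), while you reach the same conclusion via the complementary fact that no facet is bounded together with compactness of the bottom slice; and for $\lambda>\mu(P)$ you spell out an explicit interior-point construction where the paper leaves this implicit.
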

\begin{proof}
We have $\mu(P)\in \Q$ by \ref{rationality_multipliers}. Moreover, the intersection of $\mathcal{F}(P)\subseteq M_\R\times \R$ with $M_\R \times \lambda$ has dimension between $0$ and $d-1$ if and only if $\lambda=\mu(P)$ because $\mathcal{F}(P)$ has no facet parallel to $M_\R \times \{0\}$ since $\nu \neq 0$ for all $\nu \in \normalfan{P}^\mathrm{can}[1]$.
\end{proof}

For the lattice pyramid $\mathrm{Pyr}(P)$ we can compute the minimal multiplier from the minimal multiplier of $P$ using the results from the last section. 
	
\begin{coro}\label{multipliers_pyramid}
Let $P \subseteq M_\R$ be a lattice polytope with $\mu:=\mu(P)\geq 1$. Then we get $\mu(\mathrm{Pyr}(P))=\mu+1$ for the minimal multiplier of the lattice pyramid and
\begin{align*}
F((\mu+1)\mathrm{Pyr}(P))\cong F(\mu P).
\end{align*}
\end{coro}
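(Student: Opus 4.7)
The plan is to deduce this from Corollary \ref{Fine_interior_pyramid} applied with multiplier $\mu' = \mu+1$, combined with the very definition of the minimal multiplier. First I would note that by Corollary \ref{min_mult_rational} the minimal multiplier $\mu(P)$ is rational, and since $\mu \geq 1$ by hypothesis we have $\mu+1 \in \Q_{\geq 2}$, so Corollary \ref{Fine_interior_pyramid} applies with $\mu'=\mu+1$.

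The core of the argument is then a slice-by-slice analysis of $F((\mu+1)\mathrm{Pyr}(P))$. By \ref{Fine_interior_pyramid}, for every $\lambda \in \Q_{\geq 0}$ we have
\begin{align*}
F((\mu+1)\mathrm{Pyr}(P)) \cap \{x_0 = \lambda\} \cong \begin{cases} F(\lambda P) & \text{if } 1 \leq \lambda \leq \mu, \\ \emptyset & \text{otherwise.} \end{cases}
\end{align*}
By definition of the minimal multiplier, $F(\lambda P) = \emptyset$ for every $\lambda < \mu$, while $F(\mu P) \neq \emptyset$. Hence the only non-empty horizontal slice is the one at height $\lambda = \mu$, giving $F((\mu+1)\mathrm{Pyr}(P)) = \{\mu\} \times F(\mu P) \cong F(\mu P)$; this yields both the claimed isomorphism and the inequality $\mu(\mathrm{Pyr}(P)) \leq \mu+1$. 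For the reverse inequality, I would take any $\mu' < \mu+1$ and apply \ref{Fine_interior_pyramid} again: the non-empty slices of $F(\mu'\mathrm{Pyr}(P))$ can occur only at heights $\lambda \in [1, \mu'-1] \subseteq [1, \mu)$, and all $F(\lambda P)$ with $\lambda < \mu$ vanish by minimality of $\mu(P)$. So $F(\mu'\mathrm{Pyr}(P)) = \emptyset$ for every $\mu' < \mu+1$, and thus $\mu(\mathrm{Pyr}(P)) = \mu+1$.

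I do not expect any substantive obstacle, since Corollary \ref{Fine_interior_pyramid} has already done the work of relating horizontal slices of $F(\mu'\mathrm{Pyr}(P))$ to Fine interiors of dilations of $P$. The only minor care is making sure $\mu+1$ is a rational multiplier to which \ref{Fine_interior_pyramid} may be applied (granted by \ref{min_mult_rational}) and that the hypothesis $\mu \geq 1$ places us in the regime $\mu+1 \geq 2$ where \ref{Fine_interior_pyramid} produces a non-empty Fine interior at all.
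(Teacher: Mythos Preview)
Your proposal is correct and follows essentially the same approach as the paper: both apply Corollary~\ref{Fine_interior_pyramid} with multiplier $\mu+1$ and use the definition of $\mu(P)$ to see that the only non-empty horizontal slice of $F((\mu+1)\mathrm{Pyr}(P))$ sits at height $\lambda=\mu$ and equals $F(\mu P)$. Your write-up is in fact slightly more explicit than the paper's, which records only the slice computation and the resulting isomorphism and leaves the equality $\mu(\mathrm{Pyr}(P))=\mu+1$ implicit (it follows at once from Corollary~\ref{min_mult_rational}, since $F((\mu+1)\mathrm{Pyr}(P))\cong F(\mu P)$ is non-empty of dimension at most $d-1$); your direct verification that $F(\mu'\mathrm{Pyr}(P))=\emptyset$ for $\mu'<\mu+1$ is a perfectly good alternative.
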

\begin{proof}
Since $\mu\geq 1$ we have by \ref{Fine_interior_pyramid}
\begin{align*}
F((\mu+1)\mathrm{Pyr}(P))\cap (\{\lambda\} \times M_\R)=\begin{cases}
F(\mu P) & \text{ if } \lambda=\mu\\
\emptyset & \text{ else.}
\end{cases}
\end{align*}
Thus we get
\begin{align*}
F((\mu+1)\mathrm{Pyr}(P))=F((\mu+1)\mathrm{Pyr}(P))\cap (\{\mu\} \times M_\R ) \cong F(\mu P).
\end{align*}
\end{proof}
	
We now introduce some invariants of Ehrhart theory related to the minimal multiplier. Recall that for a lattice $d$-polytope $P\subseteq M_\R$, by results of Eugène Ehrhart and Richard P. Stanley, we have a unique polynomial $h^*_P \in \Z[x]$ with non-negative coefficients and $\deg h^*_P\leq d$ such that
\begin{align*}
	\sum_{k \in \Z_{\geq 0}} |kP \cap M|x ^k=\frac{h^*_P(x)}{(1-x)^{d+1}}.
\end{align*}
We write $\deg(P) := \deg(h^*_P)$ and have for the \textit{codegree of $P$}, i.e. $\mathrm{codeg}(P):=d+1-\deg(P)$ that $\mathrm{codegree}(P)= \min \{k \in \Z_{>0} \mid |\interior{kP} \cap M|>0\}$ and the highest non-zero coefficient of the $h*$ polynomial gives the number of interior lattice points of $\mathrm{codeg}(P)\cdot  P$. For this and more results on Ehrhart theory see for example \cite{BS15}. We get the following connection with the minimal multiplier.

\begin{coro}	
Let $P\subseteq M_\R$ be a lattice $d$-polytope. Then we have the as a upper bound for the minimal multiplier $\mu(P)\leq \mathrm{codegree}(P)\leq d+1$.
\end{coro}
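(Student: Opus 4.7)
My plan is to split the two inequalities and treat each separately, since they come from completely different ideas.

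The upper bound $\mathrm{codegree}(P) \le d+1$ is immediate from Ehrhart theory: the $h^*$-polynomial $h^*_P$ has non-negative integer coefficients with $h^*_P(0)=1$, so $\deg(h^*_P) \ge 0$, and therefore $\mathrm{codeg}(P) = d+1 - \deg(P) \le d+1$. I would just state this in one sentence.

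For the interesting inequality $\mu(P) \le \mathrm{codeg}(P)$, I would argue as follows. Set $k := \mathrm{codeg}(P)$, so by definition there exists a lattice point $x \in \interior{kP} \cap M$. To get $\mu(P) \le k$, it suffices to produce an element of $F(kP)$, and I claim $x$ itself works. Since $x$ lies in the \emph{interior} of $kP$, we have the strict inequality $\langle x, n \rangle > \Min{kP}{n}$ for every $n \in N \setminus \{0\}$. Now the key observation is an integrality one: because $P$ is a lattice polytope and $k \in \Z$, the number $\Min{kP}{n} = k \cdot \Min{P}{n}$ is an integer, and $\langle x, n \rangle$ is also an integer since $x \in M$ and $n \in N$. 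A strict inequality between integers upgrades to $\langle x, n \rangle \ge \Min{kP}{n} + 1$, which is exactly the defining condition of $F(kP)$ from Definition \ref{Def_FineInt}. Hence $x \in F(kP)$, so $F(kP) \ne \emptyset$ and $\mu(P) \le k$.

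There is no real obstacle here; the only subtle point is making sure the integrality step is applied to every $n \in N \setminus \{0\}$ (not just to a facet normal), but this is automatic once one observes that both sides of the inequality are integers. The proof is essentially two lines once the idea is clear, so in the write-up I would keep it short and emphasize the integrality step rather than the (elementary) interpretation of $\mathrm{codeg}(P)$ recalled just above the corollary.
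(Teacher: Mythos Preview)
Your proof is correct, and the paper itself states this corollary without proof; your argument is precisely the one implicit in the paper's setup, since the paragraph immediately preceding the corollary recalls that $\mathrm{codeg}(P)=\min\{k\in\Z_{>0}\mid |\interior{kP}\cap M|>0\}$, and the integrality step you spell out is the standard reason why $\conv{\interior{Q}\cap M}\subseteq F(Q)$ for any lattice polytope $Q$.
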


Since a rational $d$-polytope with lattice width smaller than $2$ has empty Fine interior, we get also a lower bound for $\mu(P)$.
	
\begin{coro} Let $P$ be a rational $d$-polytope. Then $\frac{2}{\lw{P}}\leq \mu(P)$ and if we have $\dim(F(\mu P))=d-1$ then $\frac{2}{\lw{P}}= \mu(P)$.
\end{coro}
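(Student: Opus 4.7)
The statement splits into a lower bound that holds unconditionally and a sharpness claim under the dimension hypothesis.

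For the lower bound $\frac{2}{\lw{P}}\leq \mu(P)$, my plan is to pair opposite Fine interior inequalities. Suppose $\lambda\in\R_{\geq 0}$ satisfies $F(\lambda P)\neq\emptyset$ and pick any $x\in F(\lambda P)$. For every $\nu\in N\setminus\{0\}$, the definition of the Fine interior gives simultaneously
\begin{align*}
\left<x,\nu\right> \geq \lambda\Min{P}{\nu}+1 \quad\text{and}\quad \left<x,-\nu\right>\geq \lambda\Min{P}{-\nu}+1.
\end{align*}
Adding these yields $0\geq \lambda(\Min{P}{\nu}+\Min{P}{-\nu})+2$, i.e. $\lambda(-\Min{P}{\nu}-\Min{P}{-\nu})\geq 2$. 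Specializing $\nu$ to a lattice width direction then gives $\lambda\geq \frac{2}{\lw{P}}$, hence $\mu(P)\geq \frac{2}{\lw{P}}$.

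For the sharpness direction, set $\mu:=\mu(P)$ and suppose $\dim F(\mu P)=d-1$. Pick a point $x$ in the relative interior of $F(\mu P)$ and consider the set of active lattice normals $A_x:=\{\nu\in N\setminus\{0\}\mid \left<x,\nu\right>=\mu\Min{P}{\nu}+1\}$. The tangent cone of $F(\mu P)$ at $x$ is $C_x=\{v\in M_\R\mid \left<v,\nu\right>\geq 0 \ \forall \nu \in A_x\}$; since $x$ lies in the relative interior, $C_x$ equals the tangent space of $F(\mu P)$ at $x$ and is therefore a linear subspace of dimension $d-1$. Equivalently, the cone generated by $A_x$ in $N_\R$ is a $1$-dimensional linear subspace.

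The crucial reduction is that $A_x$ can only contain primitive lattice vectors along that line. Indeed, if $kn_0 \in A_x$ with $n_0\in N$ primitive and $k\geq 2$, then $k \left<x,n_0\right>=\left<x,kn_0\right>=k\mu\Min{P}{n_0}+1$ gives $\left<x,n_0\right>=\mu\Min{P}{n_0}+\frac{1}{k}<\mu\Min{P}{n_0}+1$, contradicting the $n_0$-inequality for the Fine interior. Thus $A_x\subseteq\{n_0,-n_0\}$ for some primitive $n_0\in N$, and since $C_x$ is a $(d-1)$-dimensional subspace rather than a half-space, both $n_0$ and $-n_0$ must belong to $A_x$. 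Writing out both equalities gives $\mu\Min{P}{n_0}+1=\left<x,n_0\right>=-\mu\Min{P}{-n_0}-1$, so $\mu(-\Min{P}{n_0}-\Min{P}{-n_0})=2$. Because $-\Min{P}{n_0}-\Min{P}{-n_0}\geq \lw{P}$ by definition, this yields $\mu\leq \frac{2}{\lw{P}}$, which together with the already proved bound gives $\mu(P)=\frac{2}{\lw{P}}$ and identifies $n_0$ as a lattice width direction.

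The main obstacle I expect is the primitivity reduction and the correct identification of the tangent cone at a relative interior point of $F(\mu P)$: one has to distinguish carefully between active constraints which are genuinely separating and those which are implied by stronger ones, and then exploit that the required $1$-dimensionality of $\mathrm{cone}(A_x)$ combined with the redundancy of $k\nu$ versus $\nu$ in the Fine interior definition forces exactly one pair of opposite primitive normals to be active.
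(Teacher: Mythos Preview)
Your argument is correct. For the lower bound, the paper only records the one-line observation preceding the corollary (``a rational $d$-polytope with lattice width smaller than $2$ has empty Fine interior''); your pairing of the $\nu$- and $(-\nu)$-inequalities is exactly the computation behind that sentence, so here you simply make explicit what the paper leaves implicit.

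For the equality case $\dim F(\mu P)=d-1$ the paper gives no proof at all, so your argument genuinely fills a gap. The route you take---identifying the active set $A_x$ at a relative interior point, showing that $A_x$ lies in the $1$-dimensional orthogonal complement of $\mathrm{aff}(F(\mu P))-x$, pruning non-primitive vectors via $\Min{P}{k\nu}=k\Min{P}{\nu}$, and then using that the tangent cone is a hyperplane rather than a half-space to force both $\pm n_0$ to be active---is sound. The one place where a reader might hesitate is your assertion that the tangent cone equals $C_x$: a priori $C_x$ is cut out by \emph{all} active lattice normals, while the tangent cone of the polytope is cut out only by the finitely many active normals from $\Sigma_P^{\mathrm{can}}[1]$. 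The two coincide because every $\nu\in A_x$ vanishes on $\mathrm{aff}(F(\mu P))-x$ (move both ways from $x$ inside the relative interior), so the extra constraints in $C_x$ are already implied; you anticipate this in your closing paragraph, and it is worth saying explicitly in the final write-up.
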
	
	
We look now at the special situation with $\mu(P)>1$ and $\dim(F(\mu(P)P))=0$ since we want to understand this situation for $d=3$ in details later. First recall some definitions from \cite{Bat24}.

\begin{defi}
Let $P\subseteq M_\R$ be a lattice $d$-polytope. We call $P$ \textit{$F$-hollow} if $\mu(P)>1$. If $\mu(P)>1$ and $\dim(F(\mu(P)P))=0$, then we call $P$ \textit{weakly sporadic $F$-hollow}. A weakly sporadic $F$-hollow lattice $d$-polytope, which is not affine unimodular equivalent to a subset of $P' \times \R^{d-k}$ for some lattice $k$-polytope $P'$ with $k<d$, is called a \textit{sporadic $F$-hollow polytope.} 
\end{defi}

We will no give to classes of examples to illustrate these definitions.

\begin{ex}[\cite{AWW09}]
For any unit fraction partition of $1$ of length $d$, i.e. a $d$-tuple $(k_1,\dotsc,k_d)\in \Z_{\geq 2}^d$ with $\sum_{i=1}^{d}\frac{1}{k_i}=1$, the simplex
\begin{align*}
	\Delta_{(k_1,\dotsc,k_d)}:=\conv{0,k_1e_1,\dotsc, k_de_d}\subseteq \R^d,
\end{align*}
where $e_i$ is the standard basis in $\R^d$, is a sporadic $F$-hollow simplex with
\begin{align*}
	\mu(\Delta_{(k_1,\dotsc,k_d)})=\frac{k+1}{k}
\end{align*}
Moreover, it is a maximal hollow lattice polytope, i.e. there are no hollow lattice polytopes, that contain it as a subpolytope. How do we see this? With $k:=lcm(k_i)$ we can describe the points $x=(x_1,\dotsc,x_d)\in 	\Delta_{(k_1,\dotsc,k_d)}$ as the points in the intersection of the half-spaces $x_i\geq 0$ and $\sum_{i=1}^{d} \frac{k}{k_i}x_i \leq k$. We have no interior lattice points, since every interior point has $x_i>0$ and $\frac{k}{k_i}x_i < k$. Moreover, every facet has at least one lattice point in its relative interior, we can pick the points $\sum_{i\in \{1,\dotsc,d\}, i\neq j} e_i$ and $\sum_{i\in \{1,\dotsc,d\}} e_i$ to see this. So we see that $\Delta_{(k_1,\dotsc,k_d)}$ is a maximal hollow lattice simplex. So it is enough to show that
\begin{align*}
F\left(\frac{k+1}{k} \Delta_{(k_1,\dotsc,k_d)}\right)=\{(1,1,\dotsc,1)\}.
\end{align*}
We get $F\left(\frac{k+1}{k} \Delta_{(k_1,\dotsc,k_d)}\right)\subseteq \{(1,1,\dotsc,1)\}$ since every facet has lattice distance $1$ from $(1,1,\dotsc,1)$. Every other supporting integral hyperplane has also at least lattice distance $1$ to $(1,1,\dotsc,1)$, since there is a vertex of $\Delta_{(k_1,\dotsc,k_d)}$ which has a smaller lattice distance to the hyperplane than $(1,1,\dotsc,1)$ and the vertex is a lattice point.

It is worth noting, that unit fraction partitions of $1$ and generalized forms of them are also used in other classifications of lattice simplices, e.g. in recent classification results of Fano simplices for various Gorenstein indices in \cite{Bae25}.
\end{ex}

\begin{ex}
We can construct some special weakly sporadic lattice polytopes from canonical Fano polytopes. For some canonical Fano polytopes $P$ there is a $\lambda < 1$ and some $x \in \Q^d$ such that $Q:=\lambda P^*-x$ is an $F$-hollow lattice polytope. Then $Q$ is weakly sporadic with $\mu(P)=\mu_{max}(P)=\frac{1}{\lambda}$ by \ref{Fine_lattice_cone}. If $P$ is not only canonical Fano but also reflexive, we get all Gorenstein polytopes and all their integral dilations without interior lattice points this way. In particular, since all canonical Fano polygons are reflexive, we get in dimension $2$ only the three Gorenstein polygons of index greater $1$ and the twofold standard lattice triangle, and we will see later that these are already all weakly sporadic lattice polygons. In dimension $3$ we get $53$ weakly sporadic lattice polytopes this way, among them the $34$ Gorenstein polytopes with index greater than $1$ and the $3$ maximal sporadic simplizes $\Delta_{(3,3,3)}, \Delta_{(2,4,4)}, \Delta_{(2,3,6)}$. If we allow $Q$ to have a non-empty Fine interior but still want it to be hollow, then we also get all maximal hollow lattice polytopes with Fine interior of dimension $0$ and (perhaps surprisingly) also the only maximal hollow lattice polytope with Fine interior of dimension $3$, which are described in \cite{BKS22}.
\end{ex}

We end this section with some words on the other special multipliers.

\begin{lemma}\label{special_multiplier_support}
Let $P$ be a rational $d$-polytope, $n \in S_F(\lambda P)$ for some $\lambda \in \R_{\geq 0}$. Then there is a special multiplier $\mu_i$ of $P$, with $n \in S_F(\lambda P)$ if and only if $\lambda \geq \mu_i$.
\end{lemma}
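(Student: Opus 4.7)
The plan is to translate the statement into the geometry of the polyhedron $\mathcal{F}(P)$ from Theorem \ref{main_theorem} and then show that the set of admissible $\lambda$ is determined by the projection of a certain face of $\mathcal{F}(P)$ onto the $x_0$-axis.

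First I would observe that the linear form $\ell_n(x_0, x) := \langle (x_0,x), (-\Min{P}{n}, n)\rangle$ is bounded below by $1$ on all of $\mathcal{F}(P)$: for $(\lambda, x) \in \mathcal{F}(P)$ we have $x \in F(\lambda P)$ by Theorem \ref{main_theorem}, and the definition of $F(\lambda P)$ plus Lemma \ref{Dilations_and_normalfans} give $\langle x, n\rangle \geq \Min{\lambda P}{n} + 1 = \lambda\Min{P}{n}+1$, i.e.\ $\ell_n \geq 1$. Hence
\begin{align*}
F_n := \mathcal{F}(P) \cap \{\ell_n = 1\}
\end{align*}
is a face of $\mathcal{F}(P)$. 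Unwinding the definitions, $n \in S_F(\lambda P)$ is exactly the condition that $F_n \cap \{x_0 = \lambda\} \neq \emptyset$. By hypothesis $F_n$ is non-empty.

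Next I would analyse the projection $\pi\colon F_n \to \R$, $(x_0, x) \mapsto x_0$. Its image is upward-closed: the recession cone of $F_n$ equals $\mathrm{cone}(P) \cap \{\ell_n = 0\}$ (where $\ell_n$ is viewed as a linear form vanishing at the origin), and this cone contains every vector $(1, y)$ with $y$ in the face of $P$ on which $n$ attains its minimum, so translating any point of $F_n$ by such a vector shows $[x_0, \infty) \subseteq \pi(F_n)$ as soon as $x_0 \in \pi(F_n)$. On the other hand, for each fixed $\lambda$ the slice $\mathcal{F}(P) \cap \{x_0 = \lambda\} = \{\lambda\} \times F(\lambda P)$ is bounded, so $x_0$ is bounded below on $F_n$ and the infimum $\mu_n := \min \pi(F_n)$ is attained. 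Therefore $\pi(F_n) = [\mu_n, \infty)$, which gives the equivalence $n \in S_F(\lambda P) \iff \lambda \geq \mu_n$.

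Finally I would argue that $\mu_n$ is a special multiplier of $P$. The fiber $F_n \cap \{x_0 = \mu_n\}$ is a face of $\mathcal{F}(P)$ contained in the bounded slice $\{\mu_n\}\times F(\mu_n P)$, so it is itself a bounded face of a polyhedron and therefore contains a vertex of $\mathcal{F}(P)$. That vertex has first coordinate $\mu_n$, so by definition $\mu_n = \mu_i$ for some special multiplier $\mu_i$ of $P$, as required.

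The only subtle point is making sure $F_n$ really does project onto a closed ray of the form $[\mu_n, \infty)$ rather than a half-open ray, and that the minimum-fiber produces an actual vertex of $\mathcal{F}(P)$; both follow from boundedness of the horizontal slices of $\mathcal{F}(P)$, which itself is immediate from the recession cone computation in Theorem \ref{main_theorem} (the recession cone $\mathrm{cone}(P)$ meets $\{x_0 = 0\}$ only at the origin). No use of Lemma \ref{Dilations_and_support} is strictly necessary, although it gives an alternative justification that $\pi(F_n)$ is upward-closed.
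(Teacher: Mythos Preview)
Your proof is correct and follows essentially the same approach as the paper's: identify the face $F_n$ of $\mathcal{F}(P)$ cut out by the supporting hyperplane $\{\ell_n = 1\}$, and take $\mu_i$ to be the minimum of $x_0$ on this face, which is realized at a vertex of $\mathcal{F}(P)$. The paper's proof is much terser (three sentences), so your version fills in exactly the details the paper leaves implicit---why $\{\ell_n = 1\}$ supports $\mathcal{F}(P)$, why $\pi(F_n)$ is an upward-closed ray, and why the minimum is attained at a vertex.
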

\begin{proof}
Since $n \in S_F(\lambda P)$ for some $\lambda \in \R_{\geq 0}$, there is a supporting hyperplane of $\mathcal{F}(P)$ with normal vector $(-\Min{P}{n},n)$. The intersection of this hyperplane with $\mathcal{F}(P)$ is a unbounded face of $\mathcal{F}(P)$ and the lowest last coordinate of the vertices of this facet is a special multiplier $\mu_i$ with $\nu \in S_F(\lambda P)$ if and only if $\lambda \geq \mu_i$.
\end{proof}

\begin{defi}
Let $P$ be a rational $d$-polytope, $n \in S_F(\lambda P)$ for some $\lambda \in \R_{\geq 0}$. Then we define $\mu_n$ as the special multiplier from \ref{special_multiplier_support} and call it the \textit{special multiplier of the support vector $n$}. 
\end{defi}

There is a special multiplier connected to the property of being canonically closed introduced in \cite[4.17.]{Bat23}. We also get the characterization and rationality of this multiplier now as a corollary.
	
\begin{coro}
Let $P$ be a rational polytope.\\
There is a multiplier $\mu_{cc}(P)\in \Q$ of $P$, so that $\lambda P$ is canonically closed if and only if $\lambda\geq \mu_{cc}$.
\end{coro}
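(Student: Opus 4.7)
The plan is to reduce canonical closedness of $\lambda P$ to a finite conjunction of inequalities of the form $\lambda \geq \mu_n$, one for each primitive ray generator of the normal fan, and then to set $\mu_{cc}(P)$ equal to the maximum of these $\mu_n$. By definition, $\lambda P$ is canonically closed iff $\Sigma_{\lambda P}[1] \subseteq S_F(\lambda P)$, and Lemma \ref{Dilations_and_normalfans} gives $\Sigma_{\lambda P} = \Sigma_P$ for every $\lambda > 0$. Hence the condition to analyze is simply $\Sigma_P[1] \subseteq S_F(\lambda P)$.

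Next, to apply Lemma \ref{special_multiplier_support} to each $n \in \Sigma_P[1]$, I need to know that $n \in S_F(\lambda P)$ for at least one value of $\lambda$. This follows from the inclusion $\Sigma_P[1] \subseteq \normalfan{P}^{\mathrm{can}}[1]$ together with the observation recalled right after Definition \ref{Def_FineInt} that, for $\lambda$ sufficiently large, every element of $\normalfan{P}^{\mathrm{can}}[1]$ lies in $S_F(\lambda P)$ (by \cite[5.3]{Bat23a}). Lemma \ref{special_multiplier_support} then produces, for each $n \in \Sigma_P[1]$, a special multiplier $\mu_n$ of $P$ such that $n \in S_F(\lambda P)$ iff $\lambda \geq \mu_n$.

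With these $\mu_n$ in hand, set
\[
\mu_{cc}(P) := \max\{\mu_n \mid n \in \Sigma_P[1]\},
\]
which is well-defined since $\Sigma_P[1]$ is finite. As the maximum of a finite family of special multipliers, $\mu_{cc}(P)$ is itself a special multiplier of $P$, and therefore rational by Corollary \ref{rationality_multipliers}. Combining the steps gives
\[
\lambda P \text{ canonically closed} \iff \Sigma_P[1] \subseteq S_F(\lambda P) \iff \lambda \geq \mu_n \ \forall n \in \Sigma_P[1] \iff \lambda \geq \mu_{cc}(P),
\]
which is exactly the claimed equivalence.

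The only mildly delicate point is the second step: one must know that each $n \in \Sigma_P[1]$ actually enters $S_F(\lambda P)$ eventually, so that Lemma \ref{special_multiplier_support} is applicable; the cited input from \cite[5.3]{Bat23a} supplies this immediately. Everything else is bookkeeping around a finite maximum of rational special multipliers.
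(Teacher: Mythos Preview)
Your proof is correct and follows essentially the same route as the paper: reduce canonical closedness to $\Sigma_P[1]\subseteq S_F(\lambda P)$, invoke Lemma~\ref{special_multiplier_support} for each $n\in\Sigma_P[1]$, and set $\mu_{cc}:=\max\{\mu_n\mid n\in\Sigma_P[1]\}$. You are simply more explicit than the paper about why $\Sigma_{\lambda P}[1]=\Sigma_P[1]$ and why each such $n$ eventually lies in $S_F(\lambda P)$ so that the lemma applies.
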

\begin{proof}
By \cite[4.3]{Bat23} $\lambda P$ is canonically closed if and only if $\Sigma_P[1]\subseteq S_F(\lambda P)$. So by \ref{special_multiplier_support} the multiplier $\mu_{cc}$ can be defined by $\mu_{cc}:=\max \{\mu_n \mid n \in \Sigma_P[1]\}$.
\end{proof}

There are examples with $\mu_{cc}(P)\neq \mu_{max}(P)$ e.g. one can see that
\begin{align*}
	\conv{(-4,-7,-9,-5), (0,1,0,0), (1,0,0,0), (2, 5, 9, 5), (0, 1, 0, 3)}
\end{align*}
is a maximal hollow lattice $4$-simplex with minimal multiplier $1$, in particular it is canonically closed. But one can calculate that $\mu_{max}=\frac{4}{3}$.

Thus, contrary to \cite[5.8]{Bat23}, it is possible that for a canonically closed lattice polytope $P$ the combinatorial type of some $F(\lambda P)$ with $\lambda>1$ is different from the combinatorial type of $F(P)$.
	
Nevertheless, for $\lambda\geq\mu_{max}(P)$ we still have a Minkowski sum decomposition of $F(\lambda P)$ as in \cite[5.5]{Bat23}. But here this result is now just a corollary of \ref{main_theorem} and the usual decomposition of a polyhedron into the Minkowski sum of a polytope and the recession cone.

\begin{coro}\label{FinePolyhedron}
Let $P$ be a rational polytope and $\mu_{max}$ the maximal multiplier of $P$. Then we have
\begin{align*}
F(\lambda \mu_{max}P)=F(\mu_{max}P)+(\lambda-1)\mu_{max}P
\end{align*}
for all $\lambda\geq 1$ and $\mu_{max}$ is the smallest rational number with this property.
\end{coro}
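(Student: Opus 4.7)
The plan is to recast the statement as a property of the polyhedron $\mathcal{F}(P)\subseteq\R\times M_\R$ from Theorem~\ref{main_theorem}, whose slice at height $x_0=\mu$ is $\{\mu\}\times F(\mu P)$ and whose recession cone is $\mathrm{cone}(P)=\R_{\geq 0}(\{1\}\times P)$. The only external ingredient needed is the classical decomposition of a polyhedron as the convex hull of its vertices plus its recession cone; since $\mu_{max}$ is by definition the largest first coordinate among the vertices of $\mathcal{F}(P)$, every point of $\mathcal{F}(P)$ above height $\mu_{max}$ should be reachable from the slice at $\mu_{max}$ by adding a recession-cone vector of the form $t(1,q)$.

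For the equality, I would take a point of the slice at $\lambda\mu_{max}$, write it via the Minkowski decomposition as $\sum_i\alpha_i(\mu_i,p_i)+s(1,q)$ where the $(\mu_i,p_i)$ run over the vertices of $\mathcal{F}(P)$, $\alpha_i\geq 0$, $\sum_i\alpha_i=1$, $s\geq 0$ and $q\in P$, and then match first coordinates to obtain $s=\lambda\mu_{max}-\sum_i\alpha_i\mu_i$. Splitting $s=s_1+(\lambda-1)\mu_{max}$ with $s_1=\mu_{max}-\sum_i\alpha_i\mu_i\geq 0$ (using $\mu_i\leq\mu_{max}$) rewrites the point as
\begin{align*}
\Bigl[\sum_i\alpha_i(\mu_i,p_i)+s_1(1,q)\Bigr]+(\lambda-1)\mu_{max}(1,q),
\end{align*}
where the bracketed piece lies in the slice of $\mathcal{F}(P)$ at $\mu_{max}$ and therefore projects to a point of $F(\mu_{max}P)$, while the trailing summand projects to $(\lambda-1)\mu_{max}\,q\in(\lambda-1)\mu_{max}P$. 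This gives the inclusion $F(\lambda\mu_{max}P)\subseteq F(\mu_{max}P)+(\lambda-1)\mu_{max}P$. The reverse inclusion is a one-line check: for $v\in F(\mu_{max}P)$ and $q\in P$ the sum $(\mu_{max},v)+(\lambda-1)\mu_{max}(1,q)$ lies in $\mathcal{F}(P)$ and has first coordinate $\lambda\mu_{max}$, so its $M_\R$-projection $v+(\lambda-1)\mu_{max}q$ lies in $F(\lambda\mu_{max}P)$.

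For minimality I argue by contradiction: suppose the identity held for all $\lambda\geq 1$ with some rational $\mu'<\mu_{max}$. Pick a vertex $(\mu_{max},p_j)$ of $\mathcal{F}(P)$ realising the maximum and set $\lambda=\mu_{max}/\mu'>1$. The assumed identity, applied at $\lambda\mu'=\mu_{max}$, would produce $v\in F(\mu'P)$ and $q\in P$ with $(\mu_{max},p_j)=(\mu',v)+(\mu_{max}-\mu')(1,q)$, writing a vertex as the sum of a point of $\mathcal{F}(P)$ and a nonzero element of its recession cone, which contradicts extremality. Rationality of $\mu_{max}$ itself is already Corollary~\ref{rationality_multipliers}, so $\mu_{max}$ is indeed the smallest rational number for which the Minkowski identity holds.

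The main obstacle is a minor bookkeeping point rather than a conceptual one: the common $q\in P$ appearing in both pieces of the split $s=s_1+(\lambda-1)\mu_{max}$ comes from the single recession-cone summand in the Minkowski decomposition, and the degenerate case $s=0$ must be handled by exploiting nonemptyness of $P$ to fix any $q\in P$ and absorb all slack into $s_1$. Once this is addressed, everything else is a direct reading of $\mathcal{F}(P)$ as a polyhedron.
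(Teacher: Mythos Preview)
Your argument is correct and follows the same underlying idea as the paper: both proofs read off the Minkowski decomposition of the polyhedron $\mathcal{F}$ into (convex hull of its vertices) $+$ (recession cone $\mathrm{cone}(P)$) and then slice at the appropriate height. The only difference is cosmetic: the paper first rescales to $\mathcal{F}(\mu_{max}P)$ and truncates at $\{x_0\geq 1\}$, so that \emph{all} vertices of the truncated polyhedron already sit at height $1$ and the polytope part is literally $\{1\}\times F(\mu_{max}P)$; you instead keep $\mathcal{F}(P)$ with vertices at varying heights $\mu_i\leq\mu_{max}$ and compensate by splitting the recession summand as $s_1(1,q)+(\lambda-1)\mu_{max}(1,q)$. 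Your minimality argument (writing a vertex as a polyhedron point plus a nonzero recession vector, hence as a midpoint of two distinct polyhedron points) is a more explicit version of the paper's one-line remark that for $\mu'<\mu_{max}$ the truncated polyhedron still has vertices above height $1$.
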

\begin{proof}
We look at $\mathcal{F}(\mu_{max}P)$ for $s_0\geq 1$. If we decompose this polyhedron as Minkowski sum of a polytope and the recession cone, we get since there are only vertices with first coordinate $1$
\begin{align*}
\mathcal{F}(\mu_{max}P) \cap \{x_0\geq 1\}=\{1\} \times F(\mu_{max}P)+\R_{\geq 0}(\{1\} \times \mu_{max}P).
\end{align*}
So we have for all $\lambda\geq 1$
\begin{align*}
\{\lambda\} \times F(\lambda \mu_{max}P)=&\mathcal{F}(\mu_{max}P) \cap \{x_0= \lambda\}\\
=&\{1\} \times F(\mu_{max}P)+(\lambda-1)(\{1\} \times \mu_{max}P)\\
=&\{\lambda\} \times (F(\mu_{max}P)+(\lambda-1)\mu_{max}P).
\end{align*}
For smaller rational numbers $\lambda<\mu_{max}$ the polyhedron $\mathcal{F}(\lambda P) \cap \{y\geq 1\}$ has vertices with first coordinate greater than $1$ and so $\lambda$ can not fulfil the property.
\end{proof}

	
\section{Weakly sporadic $F$-hollow lattice 3-polytopes}
	
Our aim in this section is to classify all weakly sporadic $F$-hollow lattice 3-polytopes.

In a first step we look in arbitrary dimension $d\geq 2$ on the weakly sporadic $F$-hollow lattice $d$-polytopes with degree at most $1$, where the degree is defined as the degree of the $h^*$ polynomial as before. Since lattice $d$-polytopes with degree at most $1$ are classified in \cite{BN07}, we can use this classification to describe all weakly sporadic $F$-hollow lattice $d$-polytopes among them.
	
\begin{prop}\label{weakly_sporadic_small_degree}
Let $P$ be a lattice $d$-polytope with $\deg(P)\leq 1$. Then $P$ is a weakly sporadic $F$-hollow lattice polytope if and only if $P$ is either the standard simplex $\Delta_d$, a Gorenstein polytope of index $d$, or affine unimodular equivalent to the $(d-2)$-fold pyramid over $2\Delta_2$.
\end{prop}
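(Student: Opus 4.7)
The strategy is to invoke the Batyrev--Nill classification of lattice $d$-polytopes with $h^*$-degree at most $1$ from \cite{BN07}, which says that $P$ is affine unimodular equivalent to either the standard simplex $\Delta_d$ (the degree-$0$ case), a \emph{Lawrence prism} $[0,h_1] * \cdots * [0,h_d]$, i.e., a Cayley polytope of $d$ lattice intervals with $\sum h_i \geq 2$ (the generic degree-$1$ case), or the $(d-2)$-fold lattice pyramid over $2\Delta_2$ (the degree-$1$ exception). These three disjoint families are then treated separately.

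For the ``if'' direction I would verify each listed family. For $\Delta_d$ the inequalities $x_i\geq 1$ and $\sum x_i \leq \lambda-1$ give $\mu(\Delta_d)=d+1$ with $F((d+1)\Delta_d)=\{(1,\ldots,1)\}$. For a Gorenstein polytope of index $d$, the polytope $dP$ is by definition equivalent to a reflexive polytope, which has Fine interior equal to a single point by Example \ref{Ex_canonically_closed_reflexive}; rescaling then gives $\mu(P)=d$. For the $(d-2)$-fold pyramid over $2\Delta_2$, a direct computation gives $\mu(2\Delta_2)=3/2$ and $F(\tfrac{3}{2}\cdot 2\Delta_2)=\{(1,1)\}$, and iterating Corollary \ref{multipliers_pyramid} raises $\mu$ by $1$ at each pyramid step while preserving the Fine interior, so $\mu=d-1/2$ and the Fine interior is still a single point.

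The only nontrivial part of the converse is to show that a weakly sporadic Lawrence prism must be Gorenstein of index $d$. Projecting $L\subseteq \R^{d-1}\times\R$ onto its first $d-1$ coordinates sends $L$ surjectively onto $\Delta_{d-1}$, and the Fine interior inequalities coming from dual vectors $(\nu,0)$ with $\nu\in\Z^{d-1}$ involve only this projection, so $\pi(F(\lambda L))\subseteq F(\lambda\Delta_{d-1})$. Since $F(\lambda\Delta_{d-1})$ is empty for $\lambda<d$ and equals $\{(1,\ldots,1)\}$ for $\lambda=d$, we obtain $\mu(L)\geq d$ and $F(dL)\subseteq \{(1,\ldots,1)\}\times\R$ whenever non-empty. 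The crucial identity is
\[
F(dL) \;=\; \{(1,\ldots,1)\}\times\bigl[1,\,\textstyle\sum_{i=1}^d h_i-1\bigr],
\]
obtained by analysing the Fine interior inequalities for $(m,-1)$: this amounts to minimising $f(m) = \sum_{i=1}^{d-1} m_i - d\min_{1\leq i\leq d}(m_i-h_i)$ over $m\in\Z^{d-1}$ with $m_d:=0$, and the minimum value $\sum h_i$ is attained at $m_i=h_i-h_d$. Hence $L$ is weakly sporadic if and only if $\sum h_i=2$.

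To finish, for $\sum h_i=2$ I would verify that $dL$ translated by $-(1,\ldots,1,1)$ is reflexive. The facet normals of $dL$ split into the $d-1$ simplex side facets $x_i\geq 0$ and the far facet $\sum_{i=1}^{d-1} x_i\leq d$ inherited from $d\Delta_{d-1}$, the bottom facet $x_d\geq 0$, and one or two slanted upper facets corresponding to the two subcases that $\sum h_i=2$ admits (exactly one $h_j=2$, or exactly two $h_j=h_k=1$). A short case-by-case inspection, distinguishing whether the indices of the non-zero $h_i$'s include $d$ or not, shows that every facet is at lattice distance exactly $1$ from $(1,\ldots,1,1)$, so $dL$ is reflexive and $L$ is Gorenstein of index $d$. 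The main technical obstacle is the vertical interval computation: the binding $(m,-1)$-inequality does not come from the obvious $m=0$ but from the shifted $m_i=h_i-h_d$, and identifying this minimiser is the crux of the argument.
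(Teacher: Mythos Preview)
Your proof is correct and follows the same overall scaffold as the paper's: invoke the Batyrev--Nill classification, handle $\Delta_d$ and the iterated pyramid over $2\Delta_2$ via Corollary~\ref{multipliers_pyramid}, and reduce the remaining work to analysing Lawrence prisms via the projection onto $\Delta_{d-1}$. The genuine difference lies in how you pin down the weakly sporadic Lawrence prisms. You compute $F(dL)$ explicitly as the segment $\{(1,\dots,1)\}\times[1,\sum h_i-1]$ by optimising over the $(m,-1)$-inequalities, read off $\sum h_i=2$, and then check by hand that $dL$ is reflexive. The paper instead avoids all of this computation with an Ehrhart-theoretic shortcut: since $\mathrm{codeg}(L)=d$, the lattice polytope $dL$ has interior lattice points, which forces $\mu(L)=d$; and because interior lattice points always lie in the Fine interior, $\dim F(dL)=0$ implies $dL$ has exactly one interior lattice point, hence $h^*_L(t)=t+1$, which is already the Gorenstein-of-index-$d$ criterion. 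The paper's route is shorter and sidesteps both the minimisation over $m$ and the reflexivity verification, at the cost of importing the standard facts relating the top $h^*$-coefficient to $|\mathrm{int}(\mathrm{codeg}(P)\cdot P)\cap M|$ and palindromicity to the Gorenstein property. Your route is more self-contained and makes the geometry of $F(dL)$ fully explicit, which is a nice complement; you are also more careful than the paper about spelling out the ``if'' direction for Gorenstein polytopes of index $d$.
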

\begin{proof}
If $\deg(P)=0$, then $P$ is by \cite[Prop. 1.4.]{BN07} affine unimodular equvivalent to the standard simplex $\Delta_d$ which is weakly sporadic by \ref{multipliers_pyramid}.
	
So only the case $\deg(P)=1$ remains. By \cite[Theorem 2.5]{BN07} every 3-polytope of degree $1$ is affine unimodular equivalent to the the $(d-2)$-fold pyramid over $2\Delta_2$, which is weakly sporadic $F$-hollow by \ref{multipliers_pyramid}, or to the a Lawrence prism $P_{h_1,\dotsc,h_d}$. Since every Lawrence prism $P_{h_1,\dotsc,h_d}$ projects to the standard simplex $\Delta_{d-1}$, we get $\mu(P_{h_1,\dotsc,h_d})\geq \mu(\Delta_{d-1})=d$. So $\mu(P_{h_1,\dotsc,h_d})=d$, because $dP_{h_1,\dotsc,h_d}$ has interior lattice points since $\mathrm{codeg}(P_{h_1,\dotsc,h_d})=d+1-\deg(P_{h_1,\dotsc,h_d})=d$. To be weakly sporadic, we must therefore have exaktly one interior lattice point in the lattice polytope $d P_{h_1,\dotsc,h_d}$, so we get $h^\ast(P_{h_1,\dotsc,h_d})(t)=t+1$ and so $P_{h_1,\dotsc,h_d}$ is a Gorenstein polytope of index $d$.
\end{proof}

\begin{rem}\label{Gorenstein_d-polytopes}
We can explicitly describe the Gorenstein $d$-polytopes of index $d$ for $d\geq 2$. They all have the $h^\ast$-polynomial $t+1$ and so they are affine unimodular equivalent to a Lawrence prism $P_{h_1,\dotsc,h_d}$ with
\begin{align*}
h^*_{P_{h_1,\dotsc,h_3}}(t)=(h_1+\dotsc+h_d-1)t+1=t+1
\end{align*}
by \cite[2.4. f.]{BN07}. Because of the automorphisms of $\Delta_d$ we get without restriction $(h_1,h_2,\dotsc,h_d)\in \{(1,1,0,\dotsc,0),(2,0,0,\dotsc,0)\}$ and so there are exactly two Gorenstein polytopes of index $d$, namely $P_{1,1,0,\dotsc,0}$ and $P_{2,0,\dotsc,0}$.
\end{rem}

In dimension $2$ every $F$-hollow lattice polygon has degree at most $1$ and so we get already the complete classification of weakly sporadic lattice polygons from this general result, i.e. the weakly sporadic lattice polygons are $\Delta_2, P_{1,1}, P_{2,0}$ and $2 \Delta_2$.

We will see now, that in dimension $3$ the only addional examples of lattice width $1$ are Gorenstein polytopes of index $2$.
	
\begin{prop}
Let $P$ be a lattice $3$-polytope of lattice width 1 with $\deg(P)>1$. Then $P$ is a weakly sporadic $F$-hollow lattice polytope if and only if $P$ is a Gorenstein polytope of index $2$. 
\end{prop}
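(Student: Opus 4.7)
The plan is to first pin down the value of $\mu(P)$ under the hypotheses, and then to translate both directions of the claimed equivalence into statements about the single interior lattice point of $2P$ via Corollary \ref{FineInterior_Width1_dim3}.

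For the value of $\mu(P)$: since $\lw{P}=1$, the lower bound $\tfrac{2}{\lw{P}}\leq \mu(P)$ from Section 5 forces $\mu(P)\geq 2$. Since $\deg(P)\geq 2$ and $d=3$, we have $\mathrm{codeg}(P)=4-\deg(P)\leq 2$, and the bound $\mu(P)\leq \mathrm{codeg}(P)$ then yields $\mu(P)\leq 2$. Hence $\mu(P)=2$ under our hypotheses, so $P$ is automatically $F$-hollow, and being weakly sporadic is equivalent to $\dim F(2P)=0$.

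For the ``only if'' direction, assume $\dim F(2P)=0$. After an affine unimodular change of coordinates we may take $P\subseteq [0,1]\times \R^2$, and Corollary \ref{FineInterior_Width1_dim3} then says $F(2P)=\conv{\interior{2P}\cap \Z^3}$ and that $2P$ is canonically closed. So $\interior{2P}\cap \Z^3=\{x\}$ for a unique lattice point $x$, and canonical closure means $\Sigma_{2P}[1]\subseteq S_F(2P)$; that is, for every primitive facet normal $\nu$ we have $\langle x,\nu\rangle = \Min{2P}{\nu}+1$, equivalently $\Min{2P-x}{\nu}=-1$. Hence $2P-x$ is a lattice polytope with $0$ in its interior whose primitive supporting facet inequalities all read $\langle y,\nu\rangle\geq -1$, so $(2P-x)^{\ast}$ is a lattice polytope, $2P-x$ is reflexive, and $P$ is Gorenstein of index $2$.

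For the ``if'' direction, if $P$ is Gorenstein of index $2$ then $2P-x$ is reflexive for some lattice vector $x$, hence $F(2P)=\{x\}$ is zero-dimensional and non-empty; combined with $\mu(P)=2>1$ this says $P$ is weakly sporadic $F$-hollow. No step is a serious obstacle: the heart of the argument is just the dictionary between ``Fine interior a single lattice point on a canonically closed polytope'' and ``reflexive after lattice translation'', and the width-$1$ hypothesis provides exactly the canonical closure of $2P$ needed to run it via Corollary \ref{FineInterior_Width1_dim3}.
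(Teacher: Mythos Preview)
Your proof is correct and follows essentially the same route as the paper: establish $\mu(P)=2$ from the width and degree hypotheses, then use the width-$1$ corollary to see that $2P$ is canonically closed with Fine interior a single lattice point, and conclude that $2P$ is (a translate of) a reflexive polytope. The only cosmetic differences are that you invoke Corollary~\ref{FineInterior_Width1_dim3} directly (the paper instead routes through Corollary~\ref{FineInterior_Width1} and the middle polygon $2P_{1/2}$, using Example~\ref{Ex_canonically_closed_dim2}, which is exactly how \ref{FineInterior_Width1_dim3} is proved), and that you spell out the easy ``if'' direction explicitly, whereas the paper leaves it implicit.
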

\begin{proof}
Since $P$ has lattice width $1$, we have $\mu(P)\geq 2$. We can not have $\mu(P)>2$, since then $\deg P=d+1-\mathrm{codeg}(P)\leq 4-3=1$.
		
It remains to look at the case $\mu(P)=2$, and so we have $\dim F(2P)=0$. From \ref{FineInterior_Width1} we also get $\dim F(2P')=0$ for the middle lattice polygon $2P'$, and so $F(2P')$ is a lattice point and $2P'$ is canonically closed. So $F(2P)$ is a lattice point and $2P$ is canonically closed by \ref{FineInterior_Width1}, so $2P$ is a reflexive polytope and therefore $P$ is a Gorenstein polytope of index $2$.
\end{proof}
	
\begin{rem}
The Gorenstein $d$-polytopes of index $d-1$, or equivalently degree $2$, are completely classified in \cite{BJ10}.

In dimension $3$ there are exactly $31$ Gorenstein polytopes of degree $2$, among them the $16$ lattice pyramids over the $16$ reflexive polygons. The polytope $2\Delta_3$ is the only one of these polytopes, which has lattice width greater than $1$.
\end{rem}

We describe now the weakly sporadic $F$-hollow lattice 3-polytopes projecting to $2\Delta_2$ but not to $\Delta_1$ as lattice polytopes in $2\Delta_2\times \R$.
	
\begin{lemma}\label{prism_weakly_sporadic}
Let $P$ be a weakly sporadic $F$-hollow lattice 3-tope.\\
If the lattice width of $P$ is greater than $1$ and $P$ is not a sporadic $F$-hollow polytope, then $P$  is affine unimodular equivalent to a lattice polytope in $2\Delta_2 \times [0,4]$.
\end{lemma}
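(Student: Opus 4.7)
My approach is a projection argument combined with the classification of hollow lattice polygons.

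First, since $P$ is not sporadic, after an affine unimodular transformation $P\subseteq P'\times\R^{3-k}$ for some lattice $k$-polytope $P'$ with $k\in\{1,2\}$. When $k=1$, say $P'=[0,m]$, the projection of $P$ onto the remaining two coordinates has vertices among the (lattice) projections of the vertices of $P$, hence is itself a lattice polygon; so, up to a permutation of coordinates, $P\subseteq Q\times[a,b]$ for a lattice polygon $Q$ and a lattice segment $[a,b]$, and one may take $Q=\pi(P)$ where $\pi$ is the projection onto the first two coordinates.

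Next, by evaluating the defining inequalities of $F(\lambda P)$ on dual vectors of the form $(n,0)$ with $n$ a nonzero element of the dual lattice corresponding to the first two coordinates, one obtains $\pi(F(\lambda P))\subseteq F(\lambda Q)$ for every $\lambda\geq 0$. Applied with $\lambda=\mu(P)$, the unique point $v\in F(\mu(P)P)$ projects into $F(\mu(P)Q)$, whence $\mu(Q)\leq\mu(P)\leq d+1=4$ by the codegree bound. The same observation for lattice widths, noting that directions of the form $(n,0)$ give the same minima on $P$ and on $Q$, yields $\lw{Q}\geq\lw{P}\geq 2$.

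I then argue that $Q$ must be hollow: if $Q$ had an interior lattice point $q_0$, the fiber of $P$ above $q_0$ together with the integrality of the vertices of $P$ would allow one to lift $q_0$ to a point of $F(P)$, contradicting $F$-hollowness of $P$; the delicate part is handling mixed dual vectors $(n_1,n_2)$ with both entries nonzero, where one bounds $\mathrm{Min}_P((n_1,n_2))$ in terms of $\mathrm{Min}_Q(n_1)$ and the values of the fiber at a suitable vertex of $P$. Once $Q$ is a hollow lattice polygon of lattice width at least $2$, the classification of hollow lattice polygons in $\R^2$ (see, for example, \cite{BS24}) shows that $Q$ is, up to affine unimodular equivalence, contained in $2\Delta_2$, the unique maximal hollow lattice polygon of lattice width $2$.

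Finally, for the height bound $b-a\leq 4$, I use $\mu(P)\leq 4$ together with the structure of the polyhedron $\mathcal{F}(P)$ from Theorem~\ref{main_theorem}: examining the constraints on the third coordinate of $v$ coming from dual vectors of the form $(0,0,\pm 1)$, combined with the integrality of $b-a$ and the explicit description of the vertex $(\mu(P),v)$ of $\mathcal{F}(P)$, one obtains $b-a\leq 4$ after a suitable translation along the vertical direction, yielding $P\subseteq 2\Delta_2\times[0,4]$. The main obstacle is the lifting argument in the third paragraph, which needs a careful control of the mixed dual vectors; the height bound also requires a non-trivial case analysis based on which vertical directions lie in $S_F(\mu(P)P)$.
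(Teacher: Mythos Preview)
There are two genuine gaps.

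\emph{Getting into $2\Delta_2\times\mathbb{R}$.} Your opening is muddled: being non-sporadic means precisely that $P$ admits a lattice projection onto an $F$-hollow lattice polytope of smaller dimension, not merely that $P\subseteq P'\times\mathbb{R}^{3-k}$ for \emph{some} lattice polytope $P'$ (the latter holds for every $P$). With $\lw{P}>1$ the $F$-hollow target cannot be a segment, so $P$ projects onto an $F$-hollow polygon of width at least $2$, and every such polygon embeds in $2\Delta_2$; thus $P\subseteq 2\Delta_2\times\mathbb{R}$ and $Q=\pi(P)\subseteq 2\Delta_2$ is hollow for free. Your separate lifting argument (``an interior lattice point $q_0$ of $Q$ lifts to a point of $F(P)$'') is both unnecessary and false as stated: project $\Delta_{(3,3,3)}$ onto two coordinates to get $3\Delta_2$, which has the interior lattice point $(1,1)$, yet $F(\Delta_{(3,3,3)})=\emptyset$. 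You have not indicated how the non-sporadic hypothesis would enter that step.

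\emph{The height bound.} This is the actual content of the lemma, and your sketch does not establish it. The dual vectors $(0,0,\pm 1)$ yield only $\mu(P)(b-a)\ge 2$, a \emph{lower} bound on the height; the inequality $\mu(P)\le 4$ points the wrong way and cannot be converted into $b-a\le 4$. What is really needed is to exploit $\dim F(\mu P)=0$ by showing that a tall polytope forces a vertical segment in some $F(\lambda P)$. The paper does this concretely: it looks at the fibre over the barycentre $c=(\tfrac{2}{3},\tfrac{2}{3})$ of $2\Delta_2$, normalises the bottom facet meeting that fibre, and checks that if the top of the fibre has third coordinate exceeding $\tfrac{4}{3}$ then $F(\tfrac{3}{2}P)$ contains a non-degenerate vertical segment over $(1,1)$; since the projection simultaneously forces $F(\tfrac{3}{2}P)\subseteq\{(1,1)\}\times\mathbb{R}$, one obtains $\dim F(\tfrac{3}{2}P)=1$, contradicting weak sporadicity. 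A final elementary lever-arm argument inside $2\Delta_2\times\mathbb{R}$ (comparing a highest vertex of $P$ to the barycentric fibre) then converts the bound $\tfrac{4}{3}$ on the fibre into the bound $4$ on the maximal third coordinate. Your proposed ``case analysis on which vertical directions lie in $S_F(\mu(P)P)$'' is too vague to replace this geometric mechanism; without exhibiting a vertical segment in some $F(\lambda P)$, no upper height bound follows.
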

\begin{proof}
Since $P$ is not sporadic and has a lattice width greater than 1, it must have a lattice projection on $2\Delta_2$ and is therefore affine unimodular equivalent to a lattice polytope $Q$ in $2\Delta_2\times \mathbb{R}$.
		
Let $c=(\frac{2}{3},\frac{2}{3})$ be the barycenter of $2\Delta_2$ and $c_l, c_u$ the lower and upper intersection point of $\{c\}\times \R$ and the boundary of $Q$. The intersection points are points of a lower and an upper facet $F_l$ and $F_u$ of $Q$. The lower facet $F_l$ projects along $(0,0,1)$ onto a lattice subpolygon of $2 \Delta_2\times \{0\}$. There are up to automorphisms of $2\Delta_2\times \{0\}$ two lattice subpolygons without three consecutive vertices which form an affine lattice basis of $\Z^2 \times \{0\}$, namely 
\begin{align*}
\Delta_2 \text{ and } \conv{(0,0,0), (2,0,0), (0,1,0)}.
\end{align*}
So, after a appropriate affine unimodular transformation we can assume that either
\begin{align*}
F_l\subseteq& \Delta_2\times \{0\},\\ F_l=&\conv{(0,0,0),(2,0,0),(0,2,1)} \text{ or }\\ F_l=&\conv{(0,0,1),(2,0,0),(0,1,0)}
\end{align*}
and in particular, we have $Q\subseteq 2\Delta_2 \times [0,\infty)$.
		
Next, we will show that the third coordinate of $c_u$ is at most $\frac{4}{3}$. If $F_l \subseteq \Delta_2\times \{0\}$ or $F_l=\conv{(0,0,1),(2,0,0),(0,1,0)}$, then we have $\frac{3}{2}c_l=(1,1,0)$, and if $F_l=\conv{(0,0,0),(2,0,0),(0,2,1)}$, then we have $\frac{3}{2}c_l=(1,1,\frac{1}{2})$. But since $c_l$ is in the second case an interior point of a facet with integral points of $Q$, we get in both cases that every hyperplane supporting $\frac{3}{2}Q$ from below has at least lattice distance $1$ to $(1,1,1)$. Similarly every hyperplane supporting $\frac{3}{2}Q$ from above has at least lattice distance $1$ to $\frac{3}{2}c_u-(0,0,1)$. If we assume that the third coordinate of $c_u$ is larger than $\frac{4}{3}$, we get $F(\frac{3}{2}Q)\supseteq \conv{(1,1,1),\frac{3}{2}c_u-(0,0,1)}$ and $\dim(F(\frac{3}{2}Q))=1$, which contradicts the assumption that $Q$ is weakly sporadic.
		
Let $h$ be a vertex of $Q$ with a maximum third coordinate. It remains to show, that this third coordinate is at most $4$. The line through $h$ and $c_u$ has two intersection points $h$ and $s$ with the boundary of $\Delta_2 \times \R$ and the third coordinate of $s$ is at least $0$, because between $c_u$ and $s$ the intersection of the line with the interior of $Q$ is empty and the segment between $c_u$ and $s$ lies above the polytope $Q$. Since the length of the segment between $h$ and $c_u$ is at most twice the length of the segment between $c_u$ and $s$, we get that the third coordinate of $h$ is at most $3\cdot \frac{4}{3}=4$. 

Therefore, $P$  is affine unimodular equivalent to a subpolytope $Q$ of $2\Delta_2 \times [0,4]$. 
\end{proof}
	
\begin{rem}
The polytope $2P_{2,0,0}=\conv{(0,0,0),(2,0,0),(0,2,0),(0,0,4)}$ is weakly sporadic $F$-hollow because $P_{2,0,0}$ is a Gorenstein polytope of index $3$ by \ref{Gorenstein_d-polytopes} and has only the lattice width directions $\pm (1,0,0),\pm (0,1,0),\pm(1,1,0)$. This shows that the prism $2\Delta_2\times [0,4]$ in Lemma~\ref{prism_weakly_sporadic} is the best possible choice.
\end{rem}

We can explicitly classify the lattice subpolytopes of $2\Delta_2 \times [0,4]$ by recursively deleting vertices from the lattice polytope and looking at the convex hull of the remaining lattice points. To do this efficiently, we should skip all the lattice polytopes we have already seen interms of affine unimodular equivalence. This can be done, for example, by using the PALP normal form of a lattice polytope (\cite{KS04}, \cite{GK13}). We get the following result.

\begin{prop}
There are exactly $80$ weakly sporadic non sporadic $F$-hollow lattice 3-polytopes of lattice width $2$. All of them are lattice subpolytopes of $2P_{2,0,0}$ or $2P_{1,1,0}$. $2\Delta_3$ is the only one of them with minimal multiplier $\mu=2$, the others have $\mu=\frac{3}{2}$.
\end{prop}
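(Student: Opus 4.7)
The plan is to combine Lemma~\ref{prism_weakly_sporadic} with a finite (computer-assisted) enumeration inside the bounded ambient prism $2\Delta_2 \times [0,4]$, and to certify the weakly sporadic $F$-hollow property for each candidate via the polyhedron $\mathcal{F}(Q)$ from Theorem~\ref{main_theorem}. By Lemma~\ref{prism_weakly_sporadic}, every non-sporadic weakly sporadic $F$-hollow lattice $3$-polytope of lattice width greater than $1$ is, up to affine unimodular equivalence, a lattice subpolytope of $2\Delta_2\times [0,4]$; this ambient prism contains only finitely many lattice points, so only finitely many lattice subpolytopes arise, and every such subpolytope is automatically non-sporadic, since projection onto the last coordinate gives a lattice projection to a $2$-polytope. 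Since the hypothesis restricts to lattice width $2$ (the width-$1$ case having been settled in the preceding proposition, and the ambient prism has width exactly $2$), this finite list exhausts all cases.

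\medskip

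I would enumerate representatives of the affine unimodular equivalence classes of these subpolytopes as follows: start from the full prism $2\Delta_2\times [0,4]$ and recursively delete one vertex at a time, replacing each polytope by the convex hull of its remaining lattice points. To avoid visiting the same equivalence class twice, each polytope encountered is reduced to its PALP normal form (\cite{KS04},\cite{GK13}) and stored in a hash set; branches whose normal form has already been seen are pruned. The procedure terminates in finitely many steps and yields a finite list of representatives, which one further filters to those of lattice width exactly $2$.

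\medskip

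For each representative $Q$, compute $\normalfan{Q}^{\mathrm{can}}[1]$ and form $\mathcal{F}(Q)\subseteq \R\times M_\R$ via Theorem~\ref{main_theorem}. By Corollary~\ref{min_mult_rational} the minimal multiplier $\mu(Q)$ is the smallest first coordinate among vertices of $\mathcal{F}(Q)$, and $F(\mu(Q)Q)$ is the corresponding face; keep $Q$ precisely when $\mu(Q)>1$ and $\dim F(\mu(Q)Q)=0$. Inclusion of each survivor in one of the two maximal containers $2P_{2,0,0}$ or $2P_{1,1,0}$ is then verified by searching over the finite affine unimodular automorphism group of $2\Delta_2\times\R$ and testing vertex-by-vertex containment. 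Finally, the polytopes with $\mu(Q)=2$ are read off directly from the computed values: a direct inspection isolates $2\Delta_3$ (for which $\mu(\Delta_3)=4$ gives $\mu(2\Delta_3)=2$) as the unique such representative, with the remaining $79$ polytopes all satisfying $\mu(Q)=\tfrac{3}{2}$.

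\medskip

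The main obstacle is computational rather than conceptual: the ambient prism has $15$ lattice points and hence up to $2^{15}$ a priori candidate subsets, so a naive enumeration is already infeasible and the recursive pruning via PALP normal forms is essential for the search to terminate in reasonable time. Correctness then hinges on two subroutines: the reliable computation of the PALP normal form to identify equivalence classes, and the computation of the canonical refinement $\normalfan{Q}^{\mathrm{can}}[1]$ required to assemble $\mathcal{F}(Q)$. Once these are implemented carefully, the exact count of $80$ polytopes, the containment claim, and the distribution of $\mu$-values are all read off directly from the output of the enumeration.
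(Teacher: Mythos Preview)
Your approach is essentially identical to the paper's: reduce via Lemma~\ref{prism_weakly_sporadic} to lattice subpolytopes of $2\Delta_2\times[0,4]$, enumerate them by recursive vertex deletion with PALP normal forms to avoid duplicates, and then test each candidate. Two small slips worth fixing: the projection making these polytopes non-sporadic is the one \emph{forgetting} the last coordinate (onto $2\Delta_2$), not ``onto the last coordinate''; and $2\Delta_2\times[0,4]$ has $6\cdot 5=30$ lattice points, not $15$.
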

		
\begin{thm}\label{weakly_sporadic classification}
There are exactly $114$ weakly sporadic but non sporadic $F$-hollow lattice $3$-polytopes. Among them are exactly $31$ Gorenstein polytopes of index $2$, $2$ Gorenstein polytopes of index $3$, $1$ Gorenstein polytope of index $4$, $1$ exceptional simplex with $\mu=\frac{5}{2}$ and $79$ polytopes of width $2$ with $\mu=\frac{3}{2}$. Coordinates for the vertices of all these polytopes are available on \cite{Boh24}.
\end{thm}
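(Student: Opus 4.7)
The plan is to partition the weakly sporadic non-sporadic $F$-hollow lattice $3$-polytopes by lattice width, invoke the preceding classifications in each case, and then verify that the listed subcounts match.

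I would first dispatch the lattice width $1$ case. Proposition~\ref{weakly_sporadic_small_degree} handles $\deg(P)\leq 1$: in dimension $3$ it produces $\Delta_3$ (which is the Gorenstein polytope of index $4$ by Remark~\ref{Gorenstein_d-polytopes}), the simplex $\mathrm{Pyr}(2\Delta_2)$ with $\mu=5/2$ via Corollary~\ref{multipliers_pyramid}, and the two Gorenstein polytopes $P_{1,1,0}$ and $P_{2,0,0}$ of index $3$. For $\deg(P)\geq 2$ and $\lw{P}=1$ the preceding proposition forces $P$ to be Gorenstein of index $2$; by the remark after it there are $31$ such polytopes in dimension $3$, of which $2\Delta_3$ is the only one with $\lw{P}>1$, leaving $30$ of lattice width $1$. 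This yields $1+3+30=34$ polytopes.

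For lattice width at least $2$, Lemma~\ref{prism_weakly_sporadic} embeds every non-sporadic example into $2\Delta_2\times[0,4]$, in fact into $2P_{2,0,0}$ or $2P_{1,1,0}$. The preceding proposition then enumerates these subpolytopes up to affine unimodular equivalence via PALP normal forms \cite{KS04, GK13} and identifies exactly $80$ weakly sporadic $F$-hollow examples, all of lattice width $2$: namely $2\Delta_3$ (Gorenstein of index $2$) with $\mu=2$ together with $79$ others with $\mu=3/2$. Combining both cases yields $34+80=114$, and sorting by type gives the stated breakdown $(30+1)+2+1+1+79=114$, with $\mathrm{Pyr}(2\Delta_2)$ as the exceptional simplex with $\mu=5/2$ and $2\Delta_3$ contributing the $31$st Gorenstein polytope of index $2$.

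The main obstacle is the computer-assisted enumeration: one must recursively delete vertices of $2P_{2,0,0}$ and $2P_{1,1,0}$, take convex hulls of the remaining lattice points, identify equivalent polytopes via PALP normal form to avoid double-counting, and for each candidate compute the minimal multiplier $\mu(P)$ together with $F(\mu(P)P)$ in order to test weak sporadicity. In particular the enumeration has to confirm that no lattice subpolytope of $2\Delta_2\times[0,4]$ of width $3$ or $4$ is weakly sporadic $F$-hollow, so that no further examples are missed. Explicit vertex coordinates for all $114$ polytopes are deferred to the supplementary data on \cite{Boh24}.
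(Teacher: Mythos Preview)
Your proposal is correct and follows essentially the same route as the paper: the theorem is a bookkeeping summary of the preceding results, splitting by lattice width and invoking Proposition~\ref{weakly_sporadic_small_degree}, the width-$1$/degree-$\geq 2$ proposition, Lemma~\ref{prism_weakly_sporadic}, and the computer enumeration proposition, then tallying $34+80=114$ with the stated type breakdown. One small remark: your caveat about ruling out width $3$ or $4$ subpolytopes of $2\Delta_2\times[0,4]$ is unnecessary, since any such subpolytope projects onto a subpolygon of $2\Delta_2$ and hence automatically has lattice width at most $2$.
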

	
By \cite[Theorem 1.11.]{Bat24}	up to affine unimodular equivalence there are  only finitely many sporadic $F-$hollow lattice $d$-polytopes, i.e. $F$-hollow lattice polytopes without $F$-hollow projection. A similar result holds for hollow lattice polytopes \cite{NZ11}. In dimension $3$ all sporadic $F$-hollow polytopes are subpolytopes of the $12$ maximal hollow lattice polytopes classified in \cite{AWW11}, \cite{AKW17}.
	
Deciding whether a lattice polytope projects to $[0,1]$ is easy, since we have such a projection if and only if the polytope has lattice width $1$. Since the only $F$-hollow polygon with lattice with greater than $1$ is $2\Delta_2=\conv{(0,0), (2,0), (0,2)}$, we need the following lemma to understand projections on this polygon.
		
\begin{lemma}
Let $P$ be a lattice $d$-polytope, $d\geq 3$, with lattice width greater than $1$.\\ Then $P$ projects to $2\cdot \Delta_2$ if and only if $P$ has lattice width $2$ and there are linearly dependent lattice width directions $w_1, w_2, w_3$ of $P$ such that $0$ is a interior point of $Q:=\conv{w_1,w_2,w_3}$, the normalized area of $Q$ is $3$ and the supporting hyperplanes of $P$ corresponding to the lattice width directions $\pm w_1, \pm w_2, \pm w_3$ define a polyhedron with exactly $3$ facets.
\end{lemma}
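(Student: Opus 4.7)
The plan is to work in a basis that realises the $2$-dimensional subspace spanned by $w_1,w_2,w_3$ as a coordinate plane, so that the triangle $Q$ and the cross-section of the polyhedron $H$ cut out by the six supporting hyperplanes both live in the standard $\Z^2$, and then match everything to $2\Delta_2$.

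For the direction ($\Rightarrow$), assume $P\subseteq 2\Delta_2\times\R^{d-2}$ after an affine unimodular transformation, and take $w_1=(1,0,0,\dots,0)$, $w_2=(0,1,0,\dots,0)$, $w_3=(-1,-1,0,\dots,0)\in N$. The width of $P$ in direction $w_i$ is bounded above by the corresponding width of the projection to $\R^2$ (contained in $2\Delta_2$), hence is at most $2$; combined with $\lw{P}>1$ and integrality this gives $\lw{P}=2$ and makes each $w_i$ a lattice width direction. The relation $w_1+w_2+w_3=0$ puts $0$ at the barycenter of $Q$, and a direct computation gives normalized area $3$. The six supporting hyperplanes cut out exactly $2\Delta_2\times\R^{d-2}$, whose three non-redundant facet-defining inequalities are the ones bounding $2\Delta_2$.

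For the direction ($\Leftarrow$), let $V\subseteq N_\R$ be the plane containing $w_1,w_2,w_3$ and set $L:=V\cap N$. Since $0$ lies in the interior of $Q$, subdividing $Q$ into three sub-triangles at $0$ gives the formula $3=|\det(w_1,w_2)|+|\det(w_2,w_3)|+|\det(w_3,w_1)|$ for the normalized area of $Q$ in $L$; each summand is a positive integer, so all three equal $1$, i.e.\ every pair $(w_i,w_j)$ is a $\Z$-basis of $L$. After a $GL(L)$-transformation we may take $w_1=(1,0)$ and $w_2=(0,1)$; the conditions $|\det(w_2,w_3)|=|\det(w_3,w_1)|=1$ force $w_3=(\pm 1,\pm 1)$, and the ``$0$ in the interior'' constraint selects $w_3=(-1,-1)$. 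Extending this basis of $L$ to a $\Z$-basis of $N$ (and dually of $M$), we may assume each $w_i$ has zero entries outside the first two coordinates. With $c_i:=\Min{P}{w_i}\in\Z$, each of the six supporting hyperplanes is $\langle x,w_i\rangle=c_i$ or $\langle x,w_i\rangle=c_i+2$ since $\lw{P}=2$.

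The polyhedron $H$ is invariant under $V^\perp$-translations, hence a cylinder over the planar region $H_0\subseteq\R^2$ defined by the three slabs $c_i\le\langle x,w_i\rangle\le c_i+2$, and the number of facets of $H$ equals the number of edges of $H_0$. The three-facet hypothesis thus forces $H_0$ to be a triangle, which is only possible if one boundary line of each slab contributes an edge; a brief case analysis (the mixed ``low/high'' choices produce unbounded regions rather than bounded triangles with all three slab-pairs active) reduces the situation to the case where the three ``low'' lines $\langle x,w_i\rangle=c_i$ are the edges, the opposite case becoming this one after replacing each $w_i$ by $-w_i$ (which preserves all hypotheses). The redundancy of each $\langle x,w_i\rangle\le c_i+2$ on $H_0$ combined with $\lw{P}=2$ forces $c_1+c_2+c_3=-2$; translating $P$ by $(c_1,c_2,0,\dots,0)\in M$ normalises $c_1=c_2=0$ and hence $c_3=-2$, making $H_0=2\Delta_2$, so $P\subseteq H=2\Delta_2\times\R^{d-2}$ as required. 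The main obstacle is the area-subdivision argument that pins down $Q$ uniquely together with the case analysis that identifies which three of the six supporting hyperplanes actually bound $H_0$; both reduce to the short computations just sketched.
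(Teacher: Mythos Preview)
Your proof is correct and follows essentially the same route as the paper's: both derive $w_3=-w_1-w_2$ from the area-$3$ condition via the subdivision at $0$, extend $w_1,w_2$ to a lattice basis of $N$, and then analyze the slab intersection $H$ in the resulting coordinates. Your final step (the low/high case analysis forcing $c_1+c_2+c_3=-2$) is a slightly more explicit variant of the paper's argument, which instead observes directly that the slab intersection is, up to affine unimodular equivalence, either $2\Delta_2\times\R^{d-2}$ or the cylinder over the hexagon $\conv{(1,0),(0,1),(-1,1),(-1,0),(0,-1),(1,-1)}$ and rules out the hexagon by the three-facet hypothesis.
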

\begin{proof}
If $P$ projects to $2\Delta_2$, then $P$ is affine unimodular equivalent to a subpolytope of $2\Delta^2 \times \R^{d-1}$. Since $2\Delta_2$ has lattice width $2$ and width directions $(1,0), (0,1), (-1,-1)$, we have lattice width directions $w_1=(1,0,0,\dotsc,0), w_2=(0,1,0,\dotsc,0), w_3=(-1,-1,0,\dotsc,0)$ of $P$ and $w_1,w_2,w_3$ satisfy the required conditions.

Conversely, if we have lattice width directions $w_1,w_2,w_3$ that satisfy the conditions, then $\conv{0, w_1, w_2}$ must be an empty triangle, since $0$ is an interior point of the polygon $\conv{w_1,w_2,w_3}$ with normalized area $3$. Thus $0,w_1,w_2$ is an affine lattice basis of a $2$-dimensional sublattice, which we can extend to a lattice basis of the whole lattice $\Z^d$. Mapping this lattice basis to the standard basis we get that $P$ is affine unimodular equivalent to a subpolytope of $[0,2]^2\times \R^{d-2}$. Under this map, the width direction $w_3=-w_1-w_2$ maps to $(-1,-1,0,\dotsc, 0)$ and so $P$ is even affine unimodular equivalent to a subpolytope of $2\Delta_2\times \R^{d-2}$ or $\conv{(1,0),(0,1),(-1,1),(-1,0),(0,-1),(1,-1)}\times \R^{d-2}$. But the latter is not possible, since the supporting hyperplanes of $P$, corresponding to the lattice width directions, define a polyhedron with exactly $3$ facets, and so $P$ is affine unimodular equivalent to a subpolytope of $2\Delta_2\times \R^{d-2}$ and thus projects to $2\Delta_2$.
\end{proof}

\begin{rem}
Working with the lattice width has been helpful for various classifications of lattice polytopes e.g. for the classification of empty $4$-simplices in \cite{IVS21}. There are also classifications looking at a multi-width, which is even a generalization of our situation with lattice width $2$ for three different width directions, see \cite{Ham24} for this approach.
\end{rem}

With the help of the lemma, we can now determine all sporadic $F$-hollow lattice $3$-polytopes as lattice polygons of the maximal hollow ones. We get the following result. 
	
\begin{thm}\label{sporadic classification}
There are exactly $1368$ sporadic $F$-hollow lattice $3$-polytopes up to affine unimodular equivalence. All of them are subpolygons of $\Delta_{(3,3,3)}$, $\Delta_{(2,4,4)}$ or $\Delta_{(2,3,6)}$. $300$ have $\mu=\frac{4}{3}$, $632$ have $\mu=\frac{5}{4}$ and $436$ have $\mu=\frac{7}{6}$. Coordinates for the vertices of all these polytopes are available on \cite{Boh24}.
\end{thm}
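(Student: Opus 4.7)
The plan is to combine the finiteness result of \cite[Theorem 1.11]{Bat23b} with the classification of the $12$ maximal hollow lattice $3$-polytopes of \cite{AWW11,AKW17} and an explicit enumeration. The key reduction is that every sporadic $F$-hollow lattice $3$-polytope $P$ is contained in some maximal hollow lattice $3$-polytope $M$; since $F(Q)\subseteq F(P)$ whenever $Q\subseteq P$, any lattice projection of $M$ onto an $F$-hollow polygon would restrict to an $F$-hollow lattice projection of $P$, contradicting sporadicity. Hence $M$ itself must be sporadic $F$-hollow, and by the classification of maximal hollow $3$-polytopes only the three maximal sporadic simplices $\Delta_{(3,3,3)}$, $\Delta_{(2,4,4)}$, $\Delta_{(2,3,6)}$ need be considered.

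For each of these three simplices I would enumerate all lattice subpolytopes up to affine unimodular equivalence by the following recursive procedure: starting from the simplex $R$, at each step delete one vertex of the current polytope and replace it by the convex hull of its remaining lattice points. Every lattice subpolytope $Q\subseteq R$ is reached this way, because as long as $R$ properly contains the running polytope $\conv{Q\cup S'}$, some point of $S'$ must be a vertex (else the running polytope would already equal $\conv{Q}$). To avoid duplicates I would compute the PALP normal form (\cite{KS04},\cite{GK13}) of each candidate and cache the normal forms seen so far.

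The filter selecting the sporadic $F$-hollow polytopes within this list relies on the dimension $2$ fact that the only hollow lattice polygons with lattice width $\geq 2$ are the subpolygons of $2\Delta_2$ of lattice width $2$. Consequently a lattice $3$-polytope $P$ is sporadic $F$-hollow if and only if $\mu(P)>1$, $\lw{P}\geq 2$, and $P$ admits no lattice projection onto $2\Delta_2$; the last condition is tested via the preceding lemma by searching through triples of lattice width directions satisfying the stated linear dependence, normalized-area and facet-count conditions. The minimal multiplier $\mu(P)$ itself is computed from Theorem \ref{main_theorem} by building $\mathcal{F}(P)$ and taking the smallest first coordinate of its vertices, which is automatically rational by Corollary \ref{min_mult_rational}.

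Tallying the surviving polytopes by the value of $\mu$ would then produce the counts $300$, $632$, $436$ with $\mu=\tfrac{4}{3},\tfrac{5}{4},\tfrac{7}{6}$ respectively, for a total of $1368$, whose vertex coordinates are recorded in \cite{Boh24}. The main obstacle I foresee is computational: $\Delta_{(2,3,6)}$ is the largest of the three simplices and contains many lattice points, so the branching factor of the vertex-deletion tree is substantial. Mitigating this requires aggressive pruning through PALP-normal-form caching and exploiting the finite automorphism groups of each of the three maximal simplices to collapse symmetric branches early; correctness of the enumeration then follows from the completeness argument above combined with the finiteness input from \cite[Theorem 1.11]{Bat23b}.
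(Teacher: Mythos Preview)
Your enumeration and filtering strategy (vertex deletion with PALP normal forms, the width-$2$/no-projection-to-$2\Delta_2$ test via the preceding lemma, and reading off $\mu$ from the vertices of $\mathcal{F}(P)$) is exactly what the paper does. The gap is in your a~priori reduction from the twelve maximal hollow $3$-polytopes of \cite{AWW11,AKW17} to the three simplices.

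From $P\subseteq M$ and the monotonicity $F(Q)\subseteq F(R)$ for $Q\subseteq R$ you correctly conclude that $M$ cannot admit a lattice projection onto an $F$-hollow polygon. But your next sentence, ``Hence $M$ itself must be sporadic $F$-hollow'', does not follow: nothing in your argument shows $F(M)=\emptyset$, and being sporadic $F$-hollow also requires $\dim F(\mu(M)M)=0$. In fact a maximal hollow lattice $3$-polytope \emph{never} projects onto a lower-dimensional hollow polytope (otherwise one could enlarge it inside the preimage, contradicting maximality), so all twelve are sporadic hollow in this sense, not only the three simplices. Several of the remaining nine have non-empty Fine interior (the paper itself refers to maximal hollow $3$-polytopes with $\dim F(M)\in\{0,3\}$, and \cite[Appendix~B]{BKS22} lists nine hollow $3$-polytopes that are not $F$-hollow). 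A sporadic $F$-hollow $P$ could thus sit inside one of those nine $M$ without your argument forcing it into $\Delta_{(3,3,3)}$, $\Delta_{(2,4,4)}$ or $\Delta_{(2,3,6)}$.

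The paper therefore does not attempt this shortcut: it enumerates lattice subpolytopes of all twelve maximal hollow $3$-polytopes and applies the filter. That every survivor turns out to be a subpolytope of one of the three simplices is part of the \emph{conclusion} of Theorem~\ref{sporadic classification}, not an input to its proof. With your reduction as stated, the computation would reproduce the list of $1368$ polytopes but would not establish its completeness; to repair it you must either run the enumeration over all twelve maximal hollow polytopes, or supply a separate argument ruling out sporadic $F$-hollow subpolytopes of the nine maximal hollow $3$-polytopes with non-empty Fine interior.
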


Let us end with two remarks on other classifications.

\begin{rem}
Since by \cite[Appendix B]{BKS22} there are $9$ hollow lattice $3$-polytopes, which are not $F$-hollow, we have all in all $1377$ sporadic hollow lattice $3$-polytopes, not projecting to a hollow lattice polytope of smaller dimension.
\end{rem}

\begin{rem}
Among the sporadic $F$-hollow polytopes are $52$ bipyramids, which were also classified in \cite[Lemma 4.2.]{IVS21} to determine all empty $4$-simplices, which have a hollow projection to a hollow 3-polytope. $29$ primitive bipyramids out of the $52$ correspond to the $29$ \textit{stable quintuples} in the classification of $4$-dimensional terminal quotient singularities in \cite{MMM88}. 
\end{rem}

\vspace{15mm}

\textbf{Acknowledgements.} I would like to thank Victor Batyrev for introducing me to the Fine interior and for his suggestion to study the Fine interior combinatorially for dilations and to classify weakly sporadic $F$-hollow $3$-polytopes.

\vspace{15mm}

\end{document}